\newtheorem{thm}{Theorem}[section]
\newtheorem{lem}[thm]{Lemma}
\newtheorem{prop}[thm]{Proposition}
\newtheorem{cor}[thm]{Corollary}
\theoremstyle{definition}
\newtheorem{defi}[thm]{Definitions}
\newtheorem{definition}[thm]{Definition}
\newtheorem{remark}[thm]{Remark}
\newtheorem{example}[thm]{Example}
\numberwithin{equation}{section}
\def\min{\mbox{\rm min}}
\def\sdefect{\mbox{\rm sdefect}}
\begin{document}

\title{Asymptotic invariants of ideals with Noetherian symbolic Rees algebra and applications to cover ideals}

\author{Benjamin Drabkin \thanks{ University of Nebraska -- Lincoln, Lincoln Nebraska \textbf{Email address:} benjamin.drabkin@huskers.unl.edu }  \and Lorenzo Guerrieri
\thanks{Universit\`a di Catania, Dipartimento di Matematica e
Informatica, Viale A. Doria, 6, 95125 Catania, Italy \textbf{Email address:} guerrieri@dmi.unict.it }} 

\maketitle






\begin{abstract}
\noindent 
Let $I$ be an ideal whose symbolic Rees algebra is Noetherian. For $m \geq 1$, the $m$-th symbolic defect, $\sdefect(I,m)$, of $I$ is defined to be the minimal number of generators of the module $\frac{I^{(m)}}{I^m}$. We prove that $\sdefect(I,m)$ is eventually quasi-polynomial as a function in $m$.  We compute the symbolic defect explicitly for certain monomial ideals arising from graphs, termed cover ideals.  We go on to give a  formula for the Waldschmidt constant, an asymptotic invariant measuring the growth of the degrees of generators of symbolic powers, for ideals whose symbolic Rees algebra is Noetherian.

\medskip

\noindent MSC: 13F20;  05C25. \\
\noindent Keywords: Symbolic powers, Symbolic defect, Waldschimdt constant, Monomial ideals, Cover ideals of graphs.
\end{abstract}

\section{Introduction}
Let $R$ be a commutative Noetherian ring, and let $I\subseteq R$ be an ideal.  The power of $I$ form a descending chain of ideals \[I\supseteq I^2\supseteq I^3\supseteq\cdots\]
which has been the subject of much study.

The generators of the powers of $I$ are not difficult to explicitly compute.  Indeed, $I^n$ is generated by all products of $n$ elements taken among a generating set for $I$.  However, the primary decomposition of the powers of $I$ is much more difficult to compute.  Even in the case that $I=\mathfrak{p}$ is a non-maximal prime ideal, $\mathfrak{p}^n$ accrues new embedded associated primes.  With this in mind, one might be interested in studying in particular those primary components of $I^n$ whose associated primes are also associated primes of $I$.  The \textit{symbolic powers} of $I$ are a construction which captures this information.

\begin{defi}
The $n$-th symbolic power of $I$ is 
\[I^{(n)}:=\bigcap_{p\in Ass(I)}(I^n_p\cap R)\]
\end{defi}
Under this definition, the primary decomposition of $I^{(n)}$ consists of all components of the primary decomposition of $I^n$ whose associated primes are also associated primes of $I$.  In particular, if $\mathfrak{p}$ is prime, then $\mathfrak{p}^{(n)}$ is the $\mathfrak{p}$- primary component of $\mathfrak{p}^n$.  Much like the ordinary powers of $I$, the symbolic powers of $I$ form a descending chain of ideals:
\[I\supseteq I^{(2)}\supseteq I^{(3)}\supseteq\cdots\]

Further motivation for the study of symbolic powers comes from algebraic geometry.  In the case that $R$ is a polynomial ring and $I$ is a radical ideal, the symbolic power has geometric meaning.  Indeed, the results of  Zariski and Nagata in \cite{Zar} and \cite{NAG} show that the $n$-th symbolic power of $I$ consists of all polynomials which vanish to order at most $n-1$ on the variety of $I$.

Studying the relationship between ordinary and symbolic powers of an ideal gives rise to a number of interesting problem.  From the definition it is clear that $I^m\subseteq I^{(m)}$.  The opposite containment, however, does not hold in general.  Much effort has been invested into determining for which values of $r$ the containment $I^{(r)}\subseteq I^m$ holds. An overview of this topic, often called the \textit{containment problem}, can be found in articles like \cite{HOHU}, \cite{ELS}, in the survey paper \cite{SS}, and in \cite{BH}.


While the containment problem is the most-explored line of inquiry into the relationship between symbolic and ordinary powers, there are other avenues to investigate this relationship.  One such method is to study the module $I^{(m)}/I^m$.  This module is relatively unexplored: Herzog in \cite{Herz} studies the module using homological methods when $I$ is a prime ideal of height two in a three-dimensional local Noetherian ring. Arsie and Vatne study the Hilbert function of $I^{(m)}/I^m$ in \cite{AV}, giving examples for ideals of coordinates of planes and set of points in $\mathbb{P}^n$.  More recently, in \cite{GGSV} Galetto, Geramita, Shin, and Van Tuyl have studied this module by defining the \textit{$m$-th symbolic defect},
$$ \sdefect(I,m):=\mu \left( \frac{I^{(m)}}{I^m} \right) ,$$ the number of minimal generators of $I^{(m)}/I^m$. The symbolic defect measures how different $I^{(m)}$ is from $I^m$ in counting the number of generators which must be added to $I^{m}$ in order to make $I^{(m)}$.  For instance, the equality $I^{(m)}=I^m$ is equivalent to $\sdefect(I,m)=0$.  In studying symbolic defect, there are many interesting questions which arise. For instance:
\begin{enumerate}
\item For which ideals is $\sdefect(I,m)$ bounded as a function of $m$? 

\item For which ideals does $\sdefect(I,m)=t$ for given $m,t\in\mathbb{N}$? 

\item How does $\sdefect(I,m)$ grow as $m$ grows to infinity?
\end{enumerate}

 In \cite{GGSV}, the first tow questions are analyzed for the defining ideal of a general set of points in $\mathbb{P}^2$ or a set of points forming a star configuration.
 Interesting results about the symbolic defect of edge ideals of graphs can also be found in the recent work \cite{JKV}.  In this paper, we concern ourselves primarily with the asymptotic behavior of the symbolic defect: with the question of how $\sdefect(I,m)$ grows as $m$ goes to infinity.

Another asymptotic invariant for the study of the containment problem for an ideal $I \subseteq R$ is the \textit{Waldschimdt constant}, defined as $$ \hat{\alpha}(I)=\lim_{m\rightarrow\infty}\frac{\alpha(I^{(m)})}{m},$$ where $ \alpha(I)= \min \left\{ \deg f \, | \, f \in I \right\}.$ This constant was introduced by Waldschmidt in \cite{Wal}. Recently the Waldschimdt constant has been connected to the containment problem by a result of Bocci and Harbourne (\cite{BH} Theorem 1.2). They use it to find a lower bound for the \textit{resurgence} of an ideal $I$, which is defined as $$\rho(I)=\sup \lbrace \frac{m}{r}|I^{(m)}\not\subseteq I^r \rbrace.$$ The Waldschimdt constant of squarefree monomial ideals can be computed as the optimum value of a linear optimization problem as shown in \cite{BCGHJNSVV}.

In this paper, we  study asymptotic invariants pertaining to the symbolic powers of $I$ in the case where $I$ is an ideal with Noetherian symbolic Rees algebra.  A notable class of ideals having this property are monomial ideals \cite{HHT}.  In particular, we focus on the growth of $\sdefect(I,m)$, first by considering the case when $I$ is an ideal in a Noetherial local or graded-local ring, and then specializing to the case in which $I$ is the \textit{cover ideal} of a graph. In addition, we give a formula for the Waldschimdt constant $ \hat{\alpha}(I) $. 

We now describe our main results and the structure of this paper. 
In Section 2, we prove in Theorem \ref{mainhilb} that, when $I$ is an ideal with Noetherian symbolic Rees algebra, $\sdefect(I,m)$ is eventually quasi-polynomial as a function of $m$.


In Section 3, we prove that the Waldschmidt constant of an ideal can be computed by considering only the first few terms of the sequence $\left(\frac{\alpha(I^{(m)})}{m}\right)_{m\in\mathbb{N}}$ as long as $I$ has Notherian symbolic Rees algebra.  In particular, we prove in Theorem \ref{waldgenthm} that the Waldschimdt constant is equal to \[\hat{\alpha}(I)=\min_{m\leq n}\frac{\alpha(I^{(m)})}{m},\] where $n$ is the highest degree of a generator of the symbolic Rees algebra of $I$.



In Section 4, we consider the \textit{cover ideal} $J(G)$ of a graph $G$. Cover ideals are generated by monomials corresponding to vertex covers of graphs and they form an interesting family of squarefree monomial ideals. Their structure has been studied in the recent years due to the relationships between their algebraic and combinatorial properties. We provide all relevant definitions, but for an introductory text about cover and edge ideals we refer to \cite{VT}. In \cite{HHT} it is shown that the symbolic Rees algebra of cover ideals of graphs is generated in degree at most two; this fact allows to easily compute their Waldschimdt constant, applying our result of Section 3. After some preliminary description of vertex covers, we describe 
in Theorem \ref{triangletail} a family of graphs achieving arbitrarily large symbolic defects and successively we recall a known characterization of the graphs having $\sdefect{(J(G),2)=1}$.

In Section 5, we state, in Theorem \ref{sdefformula}, a recursive formula which computes the symbolic defect of cover ideals of a class of graphs having $\sdefect{(J(G),2)=1}$ and we give a criterion to compute
 the degree as a quasi-polynomial of this family of graphs. As applications we compute symbolic defect of complete and cyclic graphs. 


Throughout this paper, for all the standard notations we refer to classical commutative algebra books such as \cite{AM} and \cite{Eis} and for theory of monomial ideals in polynomial rings we refer to the book \cite{HH}.  For an introductory text about cover and edge ideals we refer to \cite{VT}.

\section{Asymptotic behavior of the symbolic defect}
We start by considering the growth of $\sdefect(I,m)$ when $I$ is an ideal (or homogeneous ideal) in a Noetherian local (or graded-local) ring $R$.  

\begin{definition}
\rm A \textit{quasi-polynomial} in $\mathbb{Q}$ is a function $f:\mathbb{N}\rightarrow\mathbb{Q}$ such that for some $n\in\mathbb{N}$ there exist rational polynomials $f_0,\ldots f_{n-1}$ having the property that for all $m\in\mathbb{N}$, $f(m)=f_r(m)$ whenever $m\equiv r$ mod $n$. The value $n$ is called a \textit{quasi-period} of $f$.
\end{definition}

 \begin{definition}
 The \textit{Rees algebra} of an ideal $I$ is defined to be the graded ring $$R(I)=\bigoplus_{i=0}^\infty I^it^i\subseteq R[t] $$ and the \textit{symbolic Rees algebra} of $I$ is defined to be  $$R_s(I)= \bigoplus_{i=0}^{\infty}I^{(i)}t^i\subseteq R[t]$$
 where $t$ is an indeterminate tracking the grading in the graded families of the powers and symbolic powers of $I$, respectively.
 \end{definition}
 
 We recall the useful fact that the  property of a function being eventually quasi-polynomial can be read off its generating function.

\begin{lem}  \rm \cite[4.4.1]{S}
\label{quasilemma}
 \it If a numerical function $\phi:\mathbb{N}\rightarrow\mathbb{Z}$ has generating function \[\sum_{n=0}^\infty \phi(n)z^n=\frac{q(z)}{\prod_{i=1}^s(1-z^{d_i})}\] for some $d_1,d_2,\ldots, d_s\in\mathbb{N}$, and some polynomial $q(z) \in \mathbb{Z}[z]$, then $\phi(n)=Q(n)$ for $n$ sufficiently large, where $Q(n)$ is a quasi-polynomial with quasi-period given by the least common multiple of $d_1,\ldots, d_n$. 
\end{lem}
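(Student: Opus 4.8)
The plan is to reduce to a single repeated factor in the denominator and then extract coefficients explicitly. Set $N=\operatorname{lcm}(d_1,\dots,d_s)$. Since each $d_i\mid N$, the factor $1-z^{d_i}$ divides $1-z^{N}$ in $\mathbb{Z}[z]$, so multiplying numerator and denominator by the cofactors $\prod_i\frac{1-z^N}{1-z^{d_i}}$ rewrites the generating function as
\[
\sum_{n\ge0}\phi(n)z^n=\frac{\tilde q(z)}{(1-z^N)^s},
\]
for some $\tilde q(z)\in\mathbb{Z}[z]$. This normalization is the crux: it concentrates all poles at the $N$-th roots of unity, each of order at most $s$, and it is the source of the claimed quasi-period $N$.

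Next I would separate off a polynomial part. By Euclidean division write $\tilde q(z)=p(z)(1-z^N)^s+r(z)$ with $p,r\in\mathbb{Z}[z]$ and $\deg r<Ns$. The summand $p(z)$ contributes only to the coefficients of $z^n$ with $n\le\deg p$, so for all $n>\deg p$ the value $\phi(n)$ equals the coefficient of $z^n$ in $r(z)(1-z^N)^{-s}$. This is exactly what makes the conclusion an eventual statement.

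Now I would extract coefficients using the elementary expansion
\[
\frac{1}{(1-z^N)^s}=\sum_{k\ge0}\binom{k+s-1}{s-1}z^{Nk}.
\]
Writing $r(z)=\sum_a r_a z^a$ as a finite sum, the coefficient of $z^n$ in $r(z)(1-z^N)^{-s}$ is $\sum_{a\,:\,N\mid(n-a)}r_a\binom{(n-a)/N+s-1}{s-1}$. Fixing a residue $n\equiv r_0\pmod N$, only those $a$ with $a\equiv r_0\pmod N$ survive, and for each such $a$ the quantity $(n-a)/N$ is an integer-valued linear function of $n$; hence $\binom{(n-a)/N+s-1}{s-1}$ is a polynomial in $n$ of degree $s-1$ with rational coefficients. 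Summing the finitely many surviving terms yields a polynomial $Q_{r_0}(n)\in\mathbb{Q}[n]$ with $\deg Q_{r_0}\le s-1$, and $\phi(n)=Q_{r_0}(n)$ for all large $n$ with $n\equiv r_0\pmod N$. Assembling the polynomials $Q_0,\dots,Q_{N-1}$ gives the quasi-polynomial $Q$ with quasi-period $N=\operatorname{lcm}(d_1,\dots,d_s)$, as claimed.

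I expect the only delicate points to be bookkeeping rather than conceptual: verifying that the polynomial part $p(z)$ affects only finitely many coefficients (so that ``$n$ sufficiently large'' is unavoidable but harmless), and confirming that clearing the denominator to $(1-z^N)^s$ does not inflate the quasi-period beyond $\operatorname{lcm}(d_1,\dots,d_s)$. Rationality of the pieces $Q_{r_0}$ is automatic here, since the binomial coefficients expand into rational polynomials in $n$, so no appeal to roots of unity or to an interpolation argument is needed.
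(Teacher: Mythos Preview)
Your argument is correct and is essentially the standard proof of this classical fact. Note, however, that the paper does not supply its own proof of this lemma: it is stated as a citation to Stanley's \emph{Enumerative Combinatorics} (Proposition~4.4.1) and used as a black box, so there is no in-paper argument to compare against. Your write-up matches the approach in Stanley---clear the denominator to a power of $1-z^{N}$ with $N=\operatorname{lcm}(d_1,\dots,d_s)$, split off a polynomial part, and read off coefficients via the negative-binomial expansion---and the bookkeeping you flag (the polynomial remainder affecting only finitely many terms, and the quasi-period not exceeding $N$) is handled exactly as you indicate.
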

 
 One of our main results is that $\sdefect(I,m)$ grows quasi-polynomially in $m$ if the symbolic Rees algebra of $I$, $R_s(I)$ is a Noetherian ring.

\begin{thm}
\label{mainhilb}
\label{quasi}
Let $R$ be a Noetherian local or graded-local ring with maximal (or homogeneous maximal) ideal $\mathfrak{m}$ and residue field $R/\mathfrak{m}=\mathbb{K}$.  Let $I$ be an ideal (or homogeneous ideal) of $R$ such that $R_s(I)$ is a Noetherian ring, and let $d_1,\ldots, d_s$ be the degrees of the generators of $R_s(I)$ as an $R$-algebra. Then $\sdefect(I,m)$ is eventually a quasi-polynomial in $m$ with quasi-period $d=\rm{lcm}(d_1,\ldots,d_s)$. 
\end{thm}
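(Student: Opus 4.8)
The plan is to express $\sdefect(I,m)$ as a difference of the Hilbert functions of two finitely generated graded $\mathbb{K}$-algebras and then invoke Lemma \ref{quasilemma}.

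First I would use Nakayama's lemma (graded Nakayama in the graded-local case): since $R$ is local (or graded-local) with residue field $\mathbb{K}$ and $I^{(m)}$ is a finitely generated $R$-module,
\[
\sdefect(I,m)=\mu\!\left(\frac{I^{(m)}}{I^m}\right)=\dim_{\mathbb{K}}\frac{I^{(m)}/I^m}{\mathfrak{m}\,(I^{(m)}/I^m)}=\dim_{\mathbb{K}}\frac{I^{(m)}}{I^m+\mathfrak{m}I^{(m)}},
\]
and, as $I^m+\mathfrak{m}I^{(m)}\supseteq\mathfrak{m}I^{(m)}$, this equals $\dim_{\mathbb{K}}\bigl(I^{(m)}/\mathfrak{m}I^{(m)}\bigr)-\dim_{\mathbb{K}}\bigl((I^m+\mathfrak{m}I^{(m)})/\mathfrak{m}I^{(m)}\bigr)$, all dimensions finite because $R$ is Noetherian.

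Next I would realize each term as a graded strand of a finitely generated algebra. Put $T=R_s(I)$, graded by the $t$-degree; then $\mathfrak{m}T$ is a homogeneous ideal with $(\mathfrak{m}T)_m=\mathfrak{m}I^{(m)}$, so $B:=T/\mathfrak{m}T$ is an $\mathbb{N}$-graded ring with $B_0=\mathbb{K}$ and $B_m\cong I^{(m)}/\mathfrak{m}I^{(m)}$. Because $T$ is generated as an $R$-algebra by homogeneous elements of degrees $d_1,\dots,d_s$, the algebra $B$ is generated over $\mathbb{K}$ by their images, hence $B$ is a finitely generated graded module over the weighted polynomial ring $\mathbb{K}[x_1,\dots,x_s]$ with $\deg x_i=d_i$; by the Hilbert--Serre theorem its Hilbert series has the shape $\sum_m(\dim_{\mathbb{K}}B_m)z^m=q(z)/\prod_{i=1}^s(1-z^{d_i})$ with $q(z)\in\mathbb{Z}[z]$, so Lemma \ref{quasilemma} gives that $\dim_{\mathbb{K}}B_m$ agrees for $m\gg0$ with a quasi-polynomial of quasi-period $d=\mathrm{lcm}(d_1,\dots,d_s)$. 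For the other term, note that the ordinary Rees algebra $R(I)=\bigoplus_m I^mt^m$ sits inside $T$ (because $I^m\subseteq I^{(m)}$) and is generated over $R$ by its degree-one part $It$; hence its image $A$ under $T\to B$ is a graded subalgebra with $A_0=\mathbb{K}$, $A_m=(I^m+\mathfrak{m}I^{(m)})/\mathfrak{m}I^{(m)}$, generated over $\mathbb{K}$ by the finite-dimensional space $A_1$. Thus $A$ is a standard graded $\mathbb{K}$-algebra, and $\dim_{\mathbb{K}}A_m$ is eventually a polynomial in $m$.

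Combining, for $m\gg0$ the function $\sdefect(I,m)=\dim_{\mathbb{K}}B_m-\dim_{\mathbb{K}}A_m$ is the difference of a quasi-polynomial of quasi-period $d$ and a polynomial; since the quasi-polynomials of quasi-period $d$ are closed under differences and contain all polynomials (quasi-period $1\mid d$), $\sdefect(I,m)$ is eventually a quasi-polynomial of quasi-period $d$. I expect the only delicate point to be the bookkeeping in the middle step: one must check that $B$ is generated precisely in the degrees $d_1,\dots,d_s$, so that Lemma \ref{quasilemma} produces exactly the quasi-period $d$ rather than a larger one, and that the correction term coming from the Rees algebra is standard graded and therefore contributes nothing to the quasi-period.
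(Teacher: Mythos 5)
Your proof is correct and follows essentially the same route as the paper's: reduce modulo $\mathfrak{m}$, identify $\sdefect(I,m)$ via Nakayama with a difference of Hilbert functions of the special fibers of the symbolic and ordinary Rees algebras, and conclude via Hilbert--Serre and Lemma \ref{quasilemma}. The only cosmetic difference is that you subtract the Hilbert function of the image of $R(I)$ in $R_s(I)/\mathfrak{m}R_s(I)$ directly, whereas the paper keeps a four-term exact sequence and tracks the kernel term $K$ separately; your worry about the exact generating degrees is harmless, since one may always use the full generating set to get the denominator $\prod_{i=1}^{s}(1-z^{d_i})$, and any multiple of a quasi-period is again a quasi-period.
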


\begin{proof}
We first note that for each $m\in\mathbb{N}$, \[0\longrightarrow I^m\longrightarrow I^{(m)}\longrightarrow \dfrac{I^{(m)}}{I^m} \longrightarrow 0\] is an exact sequence of $R$-modules.  Taking direct sums, this gives a short exact sequence of $R(I)$-modules 
\[0\longrightarrow R(I)\longrightarrow R_s(I)\longrightarrow C(I)\longrightarrow 0\] where \[C(I)=\bigoplus_{m=0}^\infty \left( \dfrac{I^{(m)}}{I^m} \right) t^m.\] Tensoring with $\overline{R}(I)=R(I)/\mathfrak{m}R(I)$ gives the exact sequence 
\[0\longrightarrow K\longrightarrow \overline{R}(I)\longrightarrow\overline{R_s}(I)\longrightarrow\overline{C}(I)\longrightarrow 0\] of $\overline{R}(I)$-modules where \[\overline{R_s}(I)=\bigoplus_{m=0}^\infty \left( \dfrac{I^{(m)}}{\mathfrak{m}I^{(m)}} \right) t^m\] and \[\overline{C}(I)=\bigoplus_{m=0}^\infty \left( \dfrac{I^{(m)}}{(I^m+\mathfrak{m}I^{(m)})} \right) t^m.\]  In particular, each strand of the above sequence is of the form \[0\longrightarrow K_m \longrightarrow \dfrac{I^{m}}{\mathfrak{m}I^m}  \longrightarrow \dfrac{I^{(m)}}{\mathfrak{m}I^{(m)}} \longrightarrow \dfrac{I^{(m)}}{(I^m+\mathfrak{m}I^{(m)})}\longrightarrow 0\] where $K_m=(I^m\cap \mathfrak{m}I^{(m)})/\mathfrak{m}I^m$.

Let $h_{\overline{C}(I)}(z)$ be the Hilbert series for $\overline{C}(I)$.  We note that $h_{\overline{C}(I)}(z)=\sum\limits_{m=0}^\infty\sdefect(I,m)z^m$ since, by Nakayama's Lemma, \[\dim_\mathbb{K}\left(\frac{I^{(m)}}{I^m+\mathfrak{m}I^{(m)}}\right)=\mu\left(\frac{I^{(m)}}{I^m}\right).\]  Furthermore, \[h_{\overline{C}(I)}(z)=h_{\overline{R_s}(I)}(z)+h_{K}(z)-h_{\overline{R}(I)}(z)\] where $h_{\overline{R_s}(I)}(z)$, $h_{K}(z)$, and $h_{\overline{R}(I)}(z)$ are the Hilbert series of $\overline{R_s}(I), K,$ and $\overline{R}(I)$ respectively.

Since $\overline{R}(I)$ is a Noetherian $\mathbb{K}$-algebra generated in degree 1 and $K$ is a homogeneous ideal of this ring, $h_{\overline{R}(I)}$ and $h_K$ are rational functions of $z$ with denominators which are powers of $(1-z)$.  Since $R_s(I)$ is a Noetherian ring, it follows that $h_{\overline{R_s}(I)}$ is a rational function with denominator given by $\prod_{i=1}^s(1-z^{d_i})$, where $d_1,d_2,\ldots,d_s$ are the degrees of the generators of $\overline{R_s}(I)$ as a Noetherian $\mathbb{K}$-algebra.  Thus $h_{\overline{C}(I)}(z)$ is a rational function on $z$, and Lemma $\ref{quasilemma}$ show both that $\sdefect(I,m)$ is quasi-polynomial in $m$ and that the quasi-period $d$ is given by $\rm{lcm}(d_1,\ldots,d_s)$.
\end{proof}

We note that, in particular, Theorem \ref{mainhilb} holds for monomial ideals. Sections 4 and 5 of this paper are dedicated to sharpening the results in Theorem \ref{mainhilb} in the case of monomial ideals arising from vertex covers of graphs.

\section{Computing the Waldschmidt constant}

In this section we prove a decomposition for the symbolic powers of an ideal $I$ having Noetherian symbolic Rees algebra and we give an useful application. In order to do this, we use information about the relationships between different symbolic powers of $I$ captured in the symbolic Rees algebra. In particular, when $R_s(I)$ is Noetherian, its maximum  generating degree can be used to understand the structure of $I^{(m)}$, as shown in the following lemma.


\begin{lem}
\label{main1}
Let $R$ be a $\mathbb{N}$-graded Noetherian ring, and let $I\subseteq R$ be a homogeneous ideal such that $R_s(I)$ is generated in degree at most $n$. Then 
\[I^{(m)}=I^m+I^{(2)}I^{(m-2)}+\dots+I^{(n)}I^{(m-n)}\] for all $m>n$.
\end{lem}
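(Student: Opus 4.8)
The plan is to prove the two inclusions separately, the inclusion $\supseteq$ being the easy one. For $\supseteq$, I would first recall the standard fact that symbolic powers are multiplicative, i.e. $I^{(a)}I^{(b)}\subseteq I^{(a+b)}$ for all $a,b\geq 0$; this is immediate from the definition, since for each $p\in\mathrm{Ass}(I)$ one has $I^{(a)}I^{(b)}\subseteq (I^aR_p\cap R)(I^bR_p\cap R)\subseteq I^{a+b}R_p\cap R$, and intersecting over the finitely many $p\in\mathrm{Ass}(I)$ gives the claim. Combined with $I^m\subseteq I^{(m)}$, this shows that each summand $I^m,\,I^{(2)}I^{(m-2)},\dots,I^{(n)}I^{(m-n)}$ on the right-hand side is contained in $I^{(m)}$.

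For the reverse inclusion, the key input is the hypothesis that $R_s(I)$ is generated as an $R$-algebra in degree (with respect to the grading by $t$) at most $n$; we may take the algebra generators to be homogeneous of degrees in $\{1,\dots,n\}$. Translating this back into the language of ideals, it says precisely that for every $m\geq 1$
\[
I^{(m)} \;=\; \sum_{\substack{a_1+\cdots+a_k=m\\ 1\leq a_i\leq n}} I^{(a_1)}\cdots I^{(a_k)},
\]
the sum running over all compositions of $m$ into parts of size between $1$ and $n$: an element of the strand $I^{(m)}t^m$ is an $R$-linear combination of products of these homogeneous generators, each sitting in some strand $I^{(a_i)}t^{a_i}$, and such a product has total $t$-degree $m$ exactly when $\sum a_i = m$.

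It then remains to check that, when $m>n$, every one of the product ideals $I^{(a_1)}\cdots I^{(a_k)}$ appearing above is contained in one of the terms $I^m,\,I^{(2)}I^{(m-2)},\dots,I^{(n)}I^{(m-n)}$. Since each part satisfies $a_i\leq n<m$, any such composition has $k\geq 2$ parts. If all parts equal $1$, the product is $I^m$ itself. Otherwise, using commutativity of ideal multiplication, reorder the factors so that $a_1\geq 2$; then $2\leq a_1\leq n$ and $1\leq m-a_1<m$, while by the multiplicativity recalled above $I^{(a_2)}\cdots I^{(a_k)}\subseteq I^{(a_2+\cdots+a_k)}=I^{(m-a_1)}$, so that $I^{(a_1)}\cdots I^{(a_k)}\subseteq I^{(a_1)}I^{(m-a_1)}$, which is the term indexed by $j=a_1$. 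Summing over all compositions of $m$ yields $I^{(m)}\subseteq I^m+I^{(2)}I^{(m-2)}+\cdots+I^{(n)}I^{(m-n)}$, completing the argument.

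I do not anticipate a serious obstacle here; the only point requiring a little care is the faithful translation of ``generated in degree at most $n$'' into the displayed decomposition of $I^{(m)}$, and checking that after reindexing the distinguished part $a_1$ genuinely lies in $\{2,\dots,n\}$ with $m-a_1\geq 1$ (which is where the hypothesis $m>n$ is used). One could alternatively run an induction on $m$, peeling off a single generator at each step, but the direct combinatorial argument above seems cleaner.
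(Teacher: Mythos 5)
Your proof is correct and follows essentially the same route as the paper: the easy inclusion via multiplicativity of symbolic powers, and the reverse inclusion by expanding $I^{(m)}$ as a sum of products of the algebra generators of $R_s(I)$ in degrees $1,\dots,n$ and absorbing each product into one of the listed terms. You spell out the final absorption step (pulling out a factor $I^{(a_1)}$ with $a_1\geq 2$ and bounding the rest by $I^{(m-a_1)}$) in more detail than the paper, which leaves it implicit, but the argument is the same.
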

\begin{proof}
Since the symbolic powers of $I$ form a graded family of ideals, we have that  \[ I^{(m)}\supseteq I^m+I^{(2)}I^{(m-2)}+\dots+I^{(n)}I^{(m-n)}. \]  
As the symbolic Rees algebra is generated in degree at most $n$, $R_s(I)=R[I,I^{(2)}t^2,\ldots, I^{(n)}t^n]$.   Therefore we have that \[I^{(m)}=\sum_{i_1+2i_2+\ldots+ni_n=m}I^{i_1}(I^{(2)})^{i_2}\cdots(I^{(n)})^{i_n}.\]  Thus \[I^{(m)}\subseteq I^m+I^{(2)}I^{(m-2)}+\dots+I^{(n)}I^{(m-n)}.\]

\end{proof}



\bigskip


Decompositions of symbolic powers of ideals along the lines of Lemma \ref{main1} 
offer a method to computing the Waldschmidt constant of ideals with finitely generated symbolic Rees algebra in terms of the initial degrees of finitely many symbolic powers.

\begin{definition}
\label{defalpha}
\rm Given an ideal $I$ in a $\mathbb{N}$-graded ring $R$, we define $$ \alpha(I)= \min \lbrace \deg f \, | \, f \in I, f\neq 0\rbrace.$$
\end{definition}

\begin{definition} 
\label{wres}
\rm Given an ideal $I$, the \textit{Waldschmidt constant}, $\hat{\alpha}(I)$, is defined by $$ \hat{\alpha}(I)=\lim_{m\rightarrow\infty}\frac{\alpha(I^{(m)})}{m}.$$ 
\end{definition}

For many details about Waldschmidt constant of squarefree monomial ideals see the paper \cite{BCGHJNSVV}. The resurgence of an ideal is another constant related to the containment problem. 

\begin{definition}
\label{defres}
\rm Given an ideal $I$, its the \textit{resurgence}, $\rho(I)$, is defined by $$\rho(I)=\sup \left\{ \frac{m}{r}|I^{(m)}\not\subseteq I^r \right\}.$$
\end{definition}

The resurgence $\rho(I)$ can be bounded below using the Waldschmidt constant.  In particular $$\rho(I)\geq \dfrac{\alpha(I)}{\hat{\alpha}(I)}$$ (see \cite{BH} Theorem 1.2).  

\begin{lem}
\label{waldgeneral}
Let $I$ be a homogeneous ideal such that $R_s(I)$ is generated in degree at most n.  For each $i\in\{1,\dots,n\}$, let $\alpha_i:= \alpha(I^{(i)})$.  Then \[\hat{\alpha}(I)=\lim_{m\rightarrow\infty}\frac{\alpha_1m+(\alpha_2-2\alpha_1)y_2+\dots+(\alpha_n-n\alpha_1)y_n}{m}\] where $y_2,\dots,y_n$ are positive integers minimizing $\alpha_1m+(\alpha_2-2\alpha_1)y_2+\dots+(\alpha_n-n\alpha_1)y_n$ with respect to the constraint $2y_2+3y_3+\dots+ny_n\leq m$.
\end{lem}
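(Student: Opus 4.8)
The plan is to turn the limit defining $\hat\alpha(I)$ into an explicit combinatorial optimization, using the decomposition of symbolic powers from Lemma~\ref{main1}. Since $R_s(I)=R[I,I^{(2)}t^2,\ldots,I^{(n)}t^n]$, comparing degree-$m$ strands on both sides (exactly as in the proof of Lemma~\ref{main1}) gives, for every $m\ge 1$,
\[ I^{(m)} \;=\; \sum_{i_1+2i_2+\cdots+n i_n=m} I^{\,i_1}\,(I^{(2)})^{i_2}\cdots(I^{(n)})^{i_n}, \]
the sum ranging over all nonnegative integer tuples $(i_1,\ldots,i_n)$. I would then invoke two elementary properties of $\alpha$ in the graded domain $R$ (a polynomial ring in all cases of interest): $\alpha$ of a finite sum of homogeneous ideals is the minimum of the individual $\alpha$'s, and $\alpha$ of a product of homogeneous ideals is the sum of the individual $\alpha$'s. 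With $\alpha_1:=\alpha(I)$, these give the exact formula
\[ \alpha\!\left(I^{(m)}\right) \;=\; \min\Big\{\, i_1\alpha_1+i_2\alpha_2+\cdots+i_n\alpha_n \;:\; i_1+2i_2+\cdots+n i_n=m,\ i_j\ge 0 \,\Big\}. \]

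Next I would reparametrize. Putting $y_j:=i_j$ for $2\le j\le n$, the equality constraint forces $i_1=m-(2y_2+\cdots+ny_n)$, so $i_1\ge 0$ becomes exactly $2y_2+3y_3+\cdots+ny_n\le m$, and substituting for $i_1$ turns the objective into $\alpha_1 m+(\alpha_2-2\alpha_1)y_2+\cdots+(\alpha_n-n\alpha_1)y_n$. This is a value-preserving bijection between the two feasible regions, so $\alpha(I^{(m)})$ equals $\alpha_1 m+(\alpha_2-2\alpha_1)y_2+\cdots+(\alpha_n-n\alpha_1)y_n$ evaluated at any minimizing tuple $(y_2,\ldots,y_n)$. (Since $I^{\,j}\subseteq I^{(j)}$ we have $\alpha_j\le j\alpha_1$, so every coefficient $\alpha_j-j\alpha_1$ is $\le 0$ and a minimizer tends to saturate the budget $2y_2+\cdots+ny_n\le m$; in particular replacing ``nonnegative'' by ``positive'' in the statement perturbs the objective by only a bounded amount and so does not affect the asymptotics.) Dividing by $m$ and letting $m\to\infty$ --- the limit exists because $m\mapsto\alpha(I^{(m)})$ is subadditive, as $I^{(a)}I^{(b)}\subseteq I^{(a+b)}$ --- yields exactly the claimed expression for $\hat\alpha(I)$.

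The bookkeeping --- the two properties of $\alpha$, the change of variables, the passage to the limit --- is routine. The one point deserving care is the equality $\alpha(AB)=\alpha(A)+\alpha(B)$: the inequality ``$\le$'' is immediate from multiplying minimal-degree elements, but ``$\ge$'' (write a homogeneous element of $AB$ as a sum of products of homogeneous elements of $A$ and $B$, discard the zero summands, and compare degrees) uses that $R$ has no zero-divisors. Without this hypothesis one obtains only $\alpha(I^{(m)})\le\min\{\ldots\}$, hence only ``$\le$'' in the final formula; since Sections~4 and~5 work in a polynomial ring, I would simply record the domain assumption explicitly.
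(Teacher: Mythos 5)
Your argument is correct and follows essentially the same route as the paper's proof: both expand $I^{(m)}$ using the generators of $R_s(I)$ in degrees at most $n$, apply $\alpha(\text{sum of ideals})=\min$ and $\alpha(\text{product of ideals})=\text{sum}$, and reparametrize so that $i_1=m-(2y_2+\cdots+ny_n)$ to obtain the stated linear optimization. Your additional remarks --- that $\alpha(AB)=\alpha(A)+\alpha(B)$ needs $R$ to be a domain, and that the limit exists by subadditivity of $m\mapsto\alpha(I^{(m)})$ --- are sound refinements of points the paper leaves implicit.
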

\begin{proof}
Since $R_s(I)=R[I,I^{(2)}t^2,\ldots,I^{(n)}t^n]$, we have that \[I^{(m)}=\sum_{2y_2+3y_3+\dots+ny_n\leq m}(I^{(2)})^{y_2}(I^{(3)})^{y_3}\cdots(I^{(n)})^{y_n}I^{m-2y_2-\ldots-ny_n}.\]  
Thus \[\alpha(I^{(m)})=\mbox{min}\left\{\alpha\left((I^{(2)})^{y_2}(I^{(3)})^{y_3}\dots(I^{(n)})^{y_n}I^{m-2y_2-\ldots-ny_n}\right)|2y_2+3y_3+\dots ny_n\leq m \right\},\] and setting $\alpha_i= \alpha(I^{(i)}))$ this gives \[\alpha(I^{(m)})=\min\left\{\alpha_1m+(\alpha_2-2\alpha_1)y_2+\dots+(\alpha_n-n\alpha_1)y_n|2y_2+3y_3+\dots ny_n\leq m\right\}.\]  
\end{proof}

 We proceed to give a formula for the Waldschmidt constant in terms of finitely many symbolic powers.
\begin{thm}
\label{waldgenthm}
Let $I$ be a homogeneous ideal such that $R_s(I)$ is generated in degree at most $n$.  Then \[\hat{\alpha}(I)=\min_{m\leq n}\frac{\alpha(I^{(m)})}{m}.\]
\end{thm}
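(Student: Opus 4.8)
The plan is to derive the formula directly from Lemma~\ref{waldgeneral}. By that lemma, $\hat{\alpha}(I)$ is the limit as $m\to\infty$ of
\[
\frac{\alpha_1 m + (\alpha_2 - 2\alpha_1)y_2 + \dots + (\alpha_n - n\alpha_1)y_n}{m},
\]
where the $y_i$ are chosen to minimize the numerator subject to $2y_2 + 3y_3 + \dots + n y_n \le m$ (and $y_i \ge 0$ integers). First I would rescale: writing $c_i = \alpha_i - i\alpha_1$ for $i = 2,\dots,n$, the quantity to minimize is $\alpha_1 + \frac{1}{m}\sum_{i=2}^n c_i y_i$. Since $\alpha_1 m = \alpha(I^m) \le \alpha(I^{(m)})$, each $c_i/i = \alpha_i/i - \alpha_1 \le 0$ is the "per-unit-weight" gain of using generators of $I^{(i)}$ rather than $I$. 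It is then natural to guess that the optimal strategy puts all available weight on the single index $i^*$ minimizing $c_i/i = \alpha_i/i - \alpha_1$, i.e. maximizing $\alpha_1 - \alpha_i/i$, and hence that the limiting value is $\alpha_1 + c_{i^*}/i^* = \alpha_{i^*}/i^* = \min_{2 \le i \le n}\alpha_i/i$; including $i=1$ trivially (where $\alpha_1/1 = \alpha_1$) gives exactly $\min_{m \le n}\frac{\alpha(I^{(m)})}{m}$.

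The key steps, in order, are: (1) prove the upper bound $\hat{\alpha}(I) \le \min_{m\le n}\frac{\alpha_i}{i}$ by exhibiting a feasible choice of $(y_2,\dots,y_n)$ for each $m$ that is nearly optimal — namely, fix the index $i^*$ achieving the minimum of $\alpha_i/i$; if $i^* = 1$ take all $y_i = 0$ and the ratio is exactly $\alpha_1$; if $i^* \ge 2$, set $y_{i^*} = \lfloor m/i^* \rfloor$ and all other $y_i = 0$, which is feasible since $i^* y_{i^*} \le m$, and compute that the numerator divided by $m$ is $\alpha_1 + c_{i^*}\lfloor m/i^*\rfloor / m \to \alpha_1 + c_{i^*}/i^* = \alpha_{i^*}/i^*$ as $m \to \infty$. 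Since the true minimum over all feasible $(y_i)$ is no larger, passing to the limit gives $\hat{\alpha}(I) \le \alpha_{i^*}/i^*$. (2) Prove the matching lower bound: for any feasible $(y_2,\dots,y_n)$ and any $m$, bound the numerator below. Write $w = \sum_{i=2}^n i y_i \le m$ for the total weight used. Then
\[
\alpha_1 m + \sum_{i=2}^n c_i y_i = \alpha_1(m - w) + \sum_{i=2}^n (\alpha_1 i + c_i) y_i = \alpha_1(m-w) + \sum_{i=2}^n \alpha_i y_i \ge \alpha_1(m-w) + \sum_{i=2}^n \frac{\alpha_i}{i}\, (i y_i),
\]
and since $\alpha_1 = \alpha_1/1 \ge \min_{j\le n}\alpha_j/j =: \mu$ and $\alpha_i/i \ge \mu$ for each $i$, the right side is $\ge \mu(m-w) + \mu w = \mu m$. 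Dividing by $m$ and taking the minimum over feasible $(y_i)$, then the limit, yields $\hat{\alpha}(I) \ge \mu$. Combining (1) and (2) gives $\hat{\alpha}(I) = \mu = \min_{m\le n}\frac{\alpha(I^{(m)})}{m}$.

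I expect the main obstacle to be purely bookkeeping rather than conceptual: making sure the integrality constraint on the $y_i$ does not cost anything in the limit (it does not, because $\lfloor m/i^*\rfloor / m \to 1/i^*$), and being careful that the rewriting $\alpha_1 i + c_i = \alpha_i$ together with the inequality $\alpha_1 \ge \mu$ covers the "leftover weight" $m - w$ correctly in the lower bound. One should also note at the outset that $\alpha_i/i$ is well-defined and finite for $i = 1,\dots,n$ (as $I \ne 0$), and that $\alpha(I^m) = m\,\alpha(I)$ in a graded ring, which is what legitimizes treating $\alpha_1 m$ as $\alpha(I^m)$; these are the only facts beyond Lemma~\ref{waldgeneral} that are needed. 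With these in hand the argument is a short two-sided estimate.
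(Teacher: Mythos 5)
Your proof is correct and takes essentially the same approach as the paper: both start from Lemma~\ref{waldgeneral} and show that the resulting linear program is optimized by concentrating all the weight on the index minimizing $\alpha_i/i$. The only difference is organizational --- the paper computes the exact optimum along the subsequence $n!\mid m$ via the substitution $z_i=iy_i$, while you give a two-sided estimate valid for every $m$ --- and this changes nothing of substance.
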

\begin{proof}
By Lemma \ref{waldgeneral}, $\alpha(I^{(m)})$ is the minimum value of \[\alpha_1m+(\alpha_2-2\alpha_1)y_2+\dots+(\alpha_n-n\alpha_1)y_n\] subject to the condition $2y_2+3y_3+\dots+ny_n\leq m$, where $\alpha_i= \alpha(I^{(i)}))$.  Equivalently, assuming that $n!|m$ and setting $z_i:=iy_i$, we see that $\alpha(I^{(m)})$ is the minimum value of 
\begin{equation}\label{eqmin}\alpha_1m+\frac{(\alpha_2-2\alpha_1)}{2}z_2+\dots+\frac{(\alpha_n-n\alpha_1)}{n}z_n\end{equation} subject to $z_2+z_3+\dots+z_n\leq m$, and for each $i$, $z_i$ is a multiple of $i$.  Let $c\in\{2,\dots,n\}$ such that $\frac{\alpha_c-c\alpha_1}{c}$ is minimal.  Then \begin{equation}\label{eqwal} \alpha_1m+\frac{(\alpha_2-2\alpha_1)}{2}z_2+\dots+\frac{(\alpha_n-n\alpha_1)}{n}z_n\geq m\alpha_1 +\sum_{i=2}^n\frac{\alpha_c-c\alpha_1}{c}z_i.\end{equation} Since $I^i\subseteq I^{(i)}$, we have $\alpha_i-i\alpha_1\leq 0$ for all $i$.  Thus Equation \ref{eqwal} is minimized when $z_2+\cdots+z_n=m$. Hence \[m\alpha_1 +\sum_{i=2}^n\frac{\alpha_c-c\alpha_1}{c}z_i\geq m\alpha_1+m\frac{\alpha_c-c\alpha_1}{c}.\]

Thus, when $n!|m$, (\ref{eqmin}) is minimized at $z_c=m$ and $z_i=0$ for $i\neq c$ with a value of $\frac{m\alpha_c}{c}$.  Therefore $$ \lim_{m\rightarrow\infty}\frac{\alpha(I^{(m)})}{m}=\frac{\alpha_c}{c}.$$
\end{proof}

We note that, in particular, Theorem \ref{waldgenthm} holds for monomial ideals.


\section{Cover ideals and sdefect$\boldsymbol{(I,2)}$}
In Theorem \ref{quasi} we found that $\sdefect(I,m)$ eventually grows quasi-polynomially when $R_S(I)$ is Noetherian.  However, this theorem does not give significant insight into the actual computation of the symbolic defect.  In this section, we turn our attention to a specific class of ideals for which the symbolic defect has combinatorial meaning: cover ideals of graphs.  After defining cover ideals and recalling a few foundational results, we turn our attention to the computation of $\sdefect(I,2)$.  In Theorem \ref{triangletail} we construct a family of cover ideals achieving arbitrarily large $\sdefect(I,2)$, and in Theorem \ref{sdef1}, we describe the class of cover ideals where $\sdefect(I,2)=1$.

In the following $R=\mathbb{K}[x_1,\dots,x_n]$ where $\mathbb{K}$ is a field.


\begin{definition}
\rm Let $G$ be a graph with vertex set $\{x_1,\dots,x_n\}$ and edge set $E$.  The cover ideal of $G$ is defined to be \[J(G):=\bigcap_{\{x_i,x_j\}\in E}(x_i,x_j) \subseteq R.\]
For $m \geq 1$, we say that a monomial $g \in R$ is an $m$-cover of $G$ if for every edge $\{x_i,x_j\}\in E$, there exists one monomial of the form $h_{ij}= x_i^a x_j^b$ with $a+b \geq m$ such that $h_{ij}$ divides $g$.
\end{definition}

The generators of $J(G)$ are the monomials which correspond to the vertex 1-covers of $G$. For any $m\in\mathbb{N}$, we see that \[J(G)^{(m)}=\bigcap_{\{x_i,x_j\}\in E}(x_i,x_j)^m\] is generated by the monomials which correspond to vertex $m$-covers of $G$ and $J(G)^m$ is generated by the monomials which correspond to vertex $m$-covers which decompose into the product of $m$ vertex 1-covers.  We say that a vertex $m$-cover is \textit{minimal} if it is not divisible by any other different vertex $m$-cover.  Thus, in the context of cover ideals, $\sdefect(J(G),m)$ counts the number of minimal vertex $m$-covers of $G$ which cannot be decomposed as a product of $m$ vertex 1-covers. We call such $m$-covers \textit{indecomposable}. 

Herzog, Hibi and Trung proved the next important result:


\begin{thm} [\cite{HHT}, Theorem 5.1]
\label{gendegree2}
Let $G$ be a graph and let $I=J(G)$ be its cover ideal. Then, the symbolic Rees algebra $R_s(I)$ is generated in degree at most $2$.
\end{thm}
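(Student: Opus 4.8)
The plan is to reduce the statement to a combinatorial decomposition of $m$-covers and then produce that decomposition by an explicit threshold construction. Recall from the setup that $R_s(J(G))=\bigoplus_{m\ge 0}J(G)^{(m)}t^m$, and that $J(G)^{(m)}=\bigcap_{\{x_i,x_j\}\in E}(x_i,x_j)^m$ is the monomial ideal whose generators are the monomials $x^c$ with $c\in\mathbb N^n$ an $m$-cover, i.e.\ with $c_i+c_j\ge m$ for every edge $\{x_i,x_j\}$. Since products of covers are again covers, $J(G)^{(\ell)}J(G)^{(m-\ell)}\subseteq J(G)^{(m)}$ always, so it is enough to prove, for each $m\ge 3$,
\[ J(G)^{(m)}\subseteq J(G)^{(1)}J(G)^{(m-1)}+J(G)^{(2)}J(G)^{(m-2)},\]
because strong induction on $m$ then gives $R_s(J(G))=R[\,J(G)^{(1)}t,\,J(G)^{(2)}t^2\,]$. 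As $J(G)^{(m)}$ is generated by the $x^c$ with $c$ an $m$-cover, this reduces to the following claim: for every $m$-cover $c$ with $m\ge 3$ there is $\ell\in\{1,2\}$ and a decomposition $c=b+c'$ in which $b$ is an $\ell$-cover and $c'$ an $(m-\ell)$-cover; indeed then $x^c=x^bx^{c'}\in J(G)^{(\ell)}J(G)^{(m-\ell)}$.

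For $m=2k+1$ odd I would take $\ell=1$ and $b=\chi_S$ with $S:=\{\,i:c_i\ge k+1\,\}$. Then $S$ is a vertex cover of $G$: an edge with both endpoints outside $S$ would have $c_i+c_j\le 2k<m$. Hence $b$ is a $1$-cover, and $b\le c$ since $c_i\ge k+1\ge 1$ on $S$. The key point is that $c':=c-b$ remains an $(m-1)$-cover: over an edge with at most one endpoint in $S$ we decrease $c_i+c_j\ge m$ by at most $1$, and over an edge with both endpoints in $S$ we have $c_i+c_j\ge 2(k+1)=m+1$, so decreasing by $2$ still leaves a value $\ge m-1$.

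For $m=2k$ even the symmetric idea of splitting $c$ into two $k$-covers fails in general — for $G=K_3$ and the $6$-cover $c=(3,3,3)$ the only coordinatewise halving is $(\tfrac32,\tfrac32,\tfrac32)$ — and this is precisely the obstruction responsible for genuine degree-$2$ generators. Instead I would take $\ell=2$ and remove a $2$-cover concentrated where $c$ is large: with $S:=\{\,i:c_i\ge k+1\,\}$ and $U:=\{\,i:c_i=k\,\}$, set $b_i=2$ on $S$, $b_i=1$ on $U$, and $b_i=0$ elsewhere. Then $b\le c$, and $b$ is a $2$-cover, since an edge with an endpoint $i$ satisfying $c_i\le k-1$ forces its other endpoint into $S$ (where $b=2$), while an edge with both endpoints in $S\cup U$ has $b_i+b_j\ge 2$. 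The substantive check is that $c':=c-b$ is an $(m-2)$-cover, which I would organize according to the value of $c_i+c_j$ on an edge: for $c_i+c_j\ge m+2$ and for $c_i+c_j=m+1$ this is immediate from $b_i+b_j\le 4$ and $b_i+b_j\le 3$ respectively, and for a tight edge $c_i+c_j=m$ either one endpoint lies in $S$ and the other has $c$-value $\le k-1$ (so $b_i+b_j=2+0$), or both endpoints lie in $U$ (so $b_i+b_j=1+1$); in every case $b_i+b_j=2$ and $c'_i+c'_j=m-2$. This yields the decomposition and completes the proof.

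I expect the even case to be the only real obstacle. The temptation is to split $c$ symmetrically, which provably cannot work once $G$ contains an odd cycle; the correct move is instead to peel off a $2$-cover supported on the ``heavy'' vertices, after which the delicate verification is the $(m-2)$-cover property across the tight edges, where the slack is smallest. The reduction in the first paragraph is the standard device for reading off the generation degree of a symbolic Rees algebra, and the odd case is a one-line threshold argument.
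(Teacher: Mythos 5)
Your argument is correct, but note that the paper does not prove this statement at all: it is quoted verbatim from Herzog--Hibi--Trung \cite{HHT} (their Theorem 5.1 on vertex cover algebras), so there is no in-paper proof to compare against. What you have written is a valid self-contained replacement for that citation. The reduction to the claim that every $m$-cover $c$ ($m\ge 3$) splits as an $\ell$-cover plus an $(m-\ell)$-cover with $\ell\in\{1,2\}$ is the standard one, and both of your constructions check out: for $m=2k+1$ the set $S=\{i:c_i\ge k+1\}$ is forced to be a vertex cover (an uncovered edge would give $c_i+c_j\le 2k<m$) and subtracting $\chi_S$ costs at most $1$ on edges meeting $S$ once and at most $2$ on edges with both ends in $S$, where the sum is at least $m+1$; for $m=2k$ the vector $b$ with value $2$ on $S$, $1$ on $U=\{i:c_i=k\}$ and $0$ elsewhere is a $2$-cover, and your case analysis on tight edges is the essential point --- a tight edge cannot meet $S$ twice nor meet both $S$ and $U$, so it contributes exactly $b_i+b_j=2$, leaving $c'_i+c'_j=m-2$. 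Your remark that the symmetric halving fails on odd cycles (e.g.\ $(3,3,3)$ on $K_3$) correctly identifies why degree-$2$ generators are genuinely needed and why the theorem is sharp, consistent with Theorem \ref{DupVil} and Corollary \ref{bipartite}. The argument in \cite{HHT} is likewise a direct decomposition of $k$-covers for $k\ge 3$, so your route is in the same spirit; the only thing your write-up omits, harmlessly, is the trivial verification that $b\le c$ componentwise in the even case (on $S$ one needs $k+1\ge 2$, which holds since $m\ge 4$ there).
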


As a corollary to this result and Theorem \ref{mainhilb}, we have the following:
\begin{cor} \label{quasiperiodcover}
Let $G$ be a graph and let $I=J(G)$ be its cover ideal.  Then $\sdefect(I,m)$ is eventually quasi-polynomial with quasi-period at most $2$.
\end{cor}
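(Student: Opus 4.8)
The plan is to deduce the statement directly from the two preceding results. Theorem \ref{mainhilb} guarantees that whenever $R_s(I)$ is Noetherian with $R$-algebra generators of degrees $d_1,\dots,d_s$, the function $\sdefect(I,m)$ is eventually a quasi-polynomial with quasi-period $\mathrm{lcm}(d_1,\dots,d_s)$. Theorem \ref{gendegree2} (Herzog--Hibi--Trung) tells us that for $I=J(G)$ the symbolic Rees algebra is generated in degree at most $2$. Combining the two should immediately bound the quasi-period by $2$, so the proof is essentially a matter of checking that the hypotheses of Theorem \ref{mainhilb} are in force.

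First I would verify those hypotheses. The ambient ring $R=\mathbb{K}[x_1,\dots,x_n]$ is Noetherian and graded-local, with homogeneous maximal ideal $\mathfrak{m}=(x_1,\dots,x_n)$ and residue field $\mathbb{K}$, and $I=J(G)$ is a monomial, hence homogeneous, ideal. By Theorem \ref{gendegree2} we may write $R_s(I)=R[\,It,\,I^{(2)}t^2\,]$, which is a finitely generated algebra over the Noetherian ring $R$ and therefore itself Noetherian. Hence Theorem \ref{mainhilb} applies to $I=J(G)$.

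It then remains to identify the quasi-period. Since $R_s(I)$ is generated in degree at most $2$, every algebra generator has degree $1$ or $2$, so $d=\mathrm{lcm}(d_1,\dots,d_s)\in\{1,2\}$; in particular $d\le 2$. By Theorem \ref{mainhilb}, $\sdefect(I,m)$ is eventually quasi-polynomial with quasi-period $d\le 2$, which is exactly the assertion. There is no genuine obstacle in this argument; the only point worth flagging is the elementary observation that "generated in degree at most $2$" forces every generating degree into $\{1,2\}$, so their least common multiple cannot exceed $2$.
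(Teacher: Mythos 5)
Your argument is correct and is exactly the paper's intended derivation: the corollary is stated as an immediate consequence of Theorem \ref{gendegree2} (generation of $R_s(J(G))$ in degree at most $2$) combined with Theorem \ref{mainhilb}, with the quasi-period bounded by the lcm of the generating degrees. Your extra care in checking the hypotheses of Theorem \ref{mainhilb} and noting that the lcm of degrees in $\{1,2\}$ is at most $2$ only makes explicit what the paper leaves implicit.
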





As another consequence of Theorem \ref{gendegree2} and of Theorem \ref{waldgenthm}, we have the following information about the Waldschmidt constant of cover ideals.

\begin{cor}
\label{wald}
Let $I=J(G)$ be the cover ideal of a graph $G$ with $n$ vertices.  Then 
\[\hat{\alpha}(I)=\frac{\alpha(I^{(2)})}{2} \] 
\end{cor}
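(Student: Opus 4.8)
The plan is to read this off directly from Theorem \ref{waldgenthm}, using the structural input of Theorem \ref{gendegree2}. By Theorem \ref{gendegree2}, the symbolic Rees algebra $R_s(J(G))$ is generated in degree at most $2$, so Theorem \ref{waldgenthm} applies with generating degree $2$ and gives
\[
\hat{\alpha}(I)=\min_{m\leq 2}\frac{\alpha(I^{(m)})}{m}=\min\left\{\alpha(I^{(1)}),\ \frac{\alpha(I^{(2)})}{2}\right\}.
\]
Since $J(G)$ is a squarefree monomial ideal it is radical, hence $I^{(1)}=I$ and $\alpha(I^{(1)})=\alpha(I)$. It therefore remains only to check that the second term in the minimum is always the smaller one, i.e.\ that $\alpha(I^{(2)})\leq 2\alpha(I)$.

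For this elementary inequality I would argue as follows. If $f\in I$ is a nonzero homogeneous element of degree $\alpha(I)$, then $f^2\in I^2$ is a nonzero element of degree $2\alpha(I)$, so $\alpha(I^2)\leq 2\alpha(I)$; together with the containment $I^2\subseteq I^{(2)}$ (immediate from the definition of symbolic powers) this yields $\alpha(I^{(2)})\leq\alpha(I^2)\leq 2\alpha(I)$. Consequently $\tfrac{1}{2}\alpha(I^{(2)})\leq\alpha(I)$, the minimum above is achieved by $\tfrac{1}{2}\alpha(I^{(2)})$, and $\hat\alpha(I)=\tfrac{1}{2}\alpha(I^{(2)})$, as claimed.

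I do not expect any genuine obstacle here: the entire content is supplied by Theorems \ref{gendegree2} and \ref{waldgenthm}, and the only thing left to verify is the routine bound $\alpha(I^{(m)})\leq m\,\alpha(I)$ in the case $m=2$. If a more combinatorial flavor is desired, one can instead note that $\alpha(J(G))$ is the minimum cardinality of a vertex cover of $G$ while $\alpha(J(G)^{(2)})$ is the minimum total weight of a $2$-cover, and doubling a minimum vertex $1$-cover produces a $2$-cover of weight $2\alpha(J(G))$, which makes the inequality transparent.
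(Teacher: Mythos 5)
Your proposal is correct and follows exactly the paper's argument: invoke Theorem \ref{gendegree2} to get generation of $R_s(I)$ in degree at most $2$, apply Theorem \ref{waldgenthm}, and observe that $\alpha(I^{(2)})\leq 2\alpha(I)$ (via $I^2\subseteq I^{(2)}$) so the minimum is achieved at $m=2$. The extra detail you supply for the inequality is just an expansion of the paper's one-line justification.
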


\begin{proof}
By Theorem \ref{gendegree2}, $R_s(I)$ is generated in degree $2$. Hence, Theorem \ref{waldgenthm} implies that $\hat{\alpha}(I)=\min_{m\leq 2}\frac{\alpha(I^{(m)})}{m}.$ Now the first result follows, since $\frac{\alpha(I^{(2)})}{2} \leq \frac{2\alpha(I)}{2}= \alpha(I).$
\end{proof}

We recall the very well known definition of bipartite graph.

\begin{definition}
\label{bip}
 A graph $G=(V,E)$ is \textit{bipartite} if there is a partition $V= V_1 \cup V_2$ on the vertex set, such that $V_1 \cap V_2 = \emptyset$ and for any edge $ \lbrace x_i, x_j \rbrace$ of $G$, $x_i \in V_1$ and $x_j \in V_2.$
\end{definition}

The following theorem by Dupont and Villareal \cite{DV} characterizes the minimal indecomposable vertex covers of bipartite graphs and the minimal indecomposable vertex 0,1 and 2-covers of non-bipartite graphs. In the latter case, since $R_s(I)$ is generated in degree $2$, these vertex covers generate all the vertex $m$-covers and therefore, in order to understand the symbolic defect of cover ideals, it is important to have a precise overview of indecomposable vertex 2-covers. Given a graph $G=(V,E)$, the set of \textit{neighbors} of a vertex $x_i$ is the set of vertices $x_j$ adjacent to $x_i$, which means $\lbrace x_i, x_j \rbrace$ is an edge of $G$. Given a set of vertices $S \subseteq V$ in $G$, the \textit{induced subgraph} on $S$ is the graph with vertex set $S$, and edge set $\{\{x,y\}\in E|x,y\in S\}$.

\begin{thm}[\cite{DV}, Theorem 1.7]
\label{DupVil}
Let $f=x_1^{a_1}\dots x_n^{a_n}\in \mathbb{K}[x_1,\dots,x_n]$.
\begin{itemize}
    \item[(i)] If $G$ is bipartite, then $f$ is an indecomposable minimal vertex $m$-cover of $G$ if and only if $m=0$ and $f=x_i$ for some $1\leq i\leq n$ or $m=1$ and $f=x_{j_1}\dots x_{j_l}$ is such that $f x_{j_i}^{-1}$ is not a vertex 1-cover of $G$ for every $i={j_1}, \ldots, {j_l}$. 
    For $m \geq 2$, all the vertex $m$-covers of $G$ are divisible by $m$ vertex 1-covers.
    \item[(ii)] If $G$ is non-bipartite, then the minimal indecomposable vertex 0,1 and 2-covers are of the form:
    \begin{itemize}
    \item[(a)](0-covers) $m=0$ and $f=x_i$ for some $i$,
    \item[(b)](1-covers) $m=1$ and $f=x_{j_1}\dots x_{j_l}$ is such that $f x_{j_i}^{-1}$ is not a vertex 1-cover of $G$ for every $i={j_1}, \ldots, {j_l}$, 
    \item[(c)](2-covers) $m=2$ and $f=x_1x_2\cdots x_n$ is the product of all the variables,
    \item[(d)](2-covers) $m=2$ and \[f=x_{i_1}^0\cdots x_{i_s}^0x_{i_{s+1}}^2\cdots x_{i_{s+t}}^2x_{i_{s+t+1}}
    \cdots x_{i_{s+t+u}}\] is such that:
    \begin{itemize}
    \item[(1)]each $x_{i_j}$ is distinct, 
    \item[(2)] $s+t+u=n$,
    \item[(3)]$\{x_{i_{s+1}},\ldots,x_{i_{s+t}}\}$ is the set of neighbors of $\{x_{i_1},\dots,x_{i_s}\}$ in $G$,
    \item[(4)]$g=x_{i_{s+1}}\cdots x_{i_{s+t}}$ is not a vertex cover of $G$, and $u\neq 0$,
    \item[(5)]the induced subgraph on $\{x_{i_{s+t+1}},\ldots, x_{i_{s+t+u}}\}$ has no isolated vertices and is not bipartite. 
    \end{itemize}
    \end{itemize}
\end{itemize}
\end{thm}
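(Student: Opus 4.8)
The plan is to argue entirely at the level of exponent vectors. Identify a monomial $f = x_1^{a_1}\cdots x_n^{a_n}$ with $a \in \mathbb{N}^n$; then $f$ is a vertex $m$-cover exactly when $a_i + a_j \ge m$ for every edge $\{x_i,x_j\}$, $f$ decomposes as a product of $m$ vertex $1$-covers exactly when $a = \chi_{S_1} + \cdots + \chi_{S_m}$ for vertex covers $S_1,\dots,S_m$, and $f$ is a minimal vertex $m$-cover exactly when $a$ is minimal in the componentwise partial order among exponent vectors of $m$-covers. A reduction useful in both parts: if $a$ is a minimal $m$-cover, then $a_v \le m$ for every $v$ — indeed, if $a_v \ge 1$, minimality forces an edge $\{x_v,x_w\}$ with $a_v + a_w = m$, so $a_v \le m$. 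In particular every minimal vertex $2$-cover is $\{0,1,2\}$-valued, which is what constrains the shape of the list in (ii).

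For (i), assume $G$ bipartite. I would prove by induction on $m$ that every $m$-cover with $m \ge 2$ can be written as $\chi_S + a'$ with $S$ a vertex cover, $\chi_S \le a$, and $a'$ an $(m-1)$-cover; iterating splits any $m$-cover into $m$ vertex covers, which is the assertion for $m \ge 2$ (classically: cover ideals of bipartite graphs are normally torsion-free). The cases $m = 0$ (the minimal $0$-covers are the variables $x_i$) and $m = 1$ (a non-squarefree $1$-cover is divisible by a smaller $1$-cover, so the minimal ones are exactly the squarefree monomials supported on a minimal vertex cover, which is the stated condition on $f x_{j_i}^{-1}$) are immediate. To produce $S$, consider the polyhedron $\{\, b \in \mathbb{R}^n : 0 \le b_v \le 1,\ b_v = 0 \text{ if } a_v = 0,\ 1 \le b_i + b_j \le a_i + a_j - m + 1 \text{ for every edge} \,\}$. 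Its nontrivial inequalities are governed by the incidence matrix of $G$, which is totally unimodular because $G$ is bipartite, so together with the integral box constraints the polyhedron is integral; and it is nonempty since $b_v := \min(1,\, a_v/m)$ is feasible — the only inequality needing care, $b_i + b_j \le a_i + a_j - m + 1$, reduces after clearing denominators to $a_i + a_j \ge m$. Any integral vertex $b$ is $\chi_S$ for the desired cover $S$.

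For (ii), let $a$ be a minimal vertex $2$-cover of a non-bipartite $G$, so $a$ is $\{0,1,2\}$-valued; set $Z = a^{-1}(0)$, $T = a^{-1}(2)$, $U = a^{-1}(1)$. I would establish the conditions one at a time by short exchange arguments: $Z$ is independent and $N(Z) \subseteq T$ (otherwise an edge at a $Z$-vertex is undercovered); $T = N(Z)$, since a $T$-vertex all of whose neighbors have positive value could be lowered to $1$, contradicting minimality — this is (3); if $Z = \emptyset$ then $T = \emptyset$ and $a = \mathbf{1}$, which is case (c), and one checks separately that $\mathbf{1}$ is a minimal $2$-cover iff $G$ has no isolated vertex and is indecomposable iff $G$ is non-bipartite; if $Z \neq \emptyset$ then $T$ is not a vertex cover and $U \neq \emptyset$ — otherwise $a$ decomposes as $\chi_{V \setminus Z} + \chi_T$, or (when $U = \emptyset$, so $V = Z \sqcup T$ with $Z$ independent and $T$ a vertex cover) as $\chi_T + \chi_T$ — which is (4); $G[U]$ has no isolated vertex, because such a vertex would have all its neighbors in $T$ (it cannot be adjacent to $Z$) and so could be lowered to $0$; and $G[U]$ is non-bipartite, since a bipartition $U = U_1 \sqcup U_2$ would give the decomposition $a = \chi_{T \cup U_1} + \chi_{T \cup U_2}$ — these last two facts are (5). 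Conversely, for $a$ satisfying (1)–(5) I would verify directly that it is a $2$-cover by going through the edge types ($ZT$, $TT$, $TU$, $UU$, with $ZZ$ and $ZU$ excluded by $N(Z)=T$), that it is minimal by running the lowering arguments in reverse, and that it is indecomposable because in any decomposition $a = \chi_{S_1} + \chi_{S_2}$ both $S_1 \cap U$ and its complement $U \setminus S_1$ would be vertex covers of $G[U]$, forcing $G[U]$ bipartite against (5).

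The only step that is more than a one-line exchange argument is the peeling step in (i); its substantive content is the total unimodularity of the incidence matrix of a bipartite graph — equivalently, the integrality of this vertex-cover-type polytope with box constraints — together with the choice of the feasible fractional point $b_v = \min(1, a_v/m)$. Everything in (ii) is bookkeeping; the only pitfalls I foresee there are missing an edge type in the verifications or forgetting that all the minimality arguments tacitly use that every vertex has a neighbor. Both are handled by assuming at the outset that $G$ has no isolated vertex, which is harmless since an isolated vertex carries exponent $0$ in any minimal cover and may simply be deleted.
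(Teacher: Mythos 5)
The paper does not prove this statement: it is quoted verbatim from Dupont--Villarreal \cite[Theorem 1.7]{DV}, so there is no internal proof to compare against. Your argument is, as far as I can check, a correct and self-contained proof of the quoted result, and it is worth recording how it differs from the cited source. Dupont and Villarreal obtain the characterization from the theory of Rees cones and their irreducible representations (with the bipartite case resting on the classical fact that cover ideals of bipartite graphs are normally torsion-free); you instead give a direct combinatorial argument whose only non-elementary ingredient is the total unimodularity of the incidence matrix of a bipartite graph, used to make the peeling polytope $\{\,0\le b\le \min(1,a),\ 1\le b_i+b_j\le a_i+a_j-m+1\,\}$ integral. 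I verified the feasibility of $b_v=\min(1,a_v/m)$ in all three cases ($a_i,a_j\le m$; one exponent $>m$; both $>m$), the preliminary bound $a_v\le m$ for minimal $m$-covers, and each exchange argument in part (ii), including the two decompositions $a=\chi_{V\setminus Z}+\chi_T$ and $a=\chi_{T\cup U_1}+\chi_{T\cup U_2}$ and the converse indecomposability argument via a forced bipartition of $G[U]$; all are sound, and the reduction to graphs without isolated vertices legitimately removes the only degenerate cases. The one clause that does not literally follow from the paper's definition of $m$-cover is the $m=0$ case: under that definition the monomial $1$ is the unique minimal $0$-cover, and the assertion that the indecomposable $0$-covers are the variables $x_i$ is really the convention that the degree-zero part of the symbolic Rees algebra is generated by the variables --- this is a quirk of the quoted statement, not a gap in your argument.
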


An important consequence for the theory of cover ideals of graph is the following result which is also a corollary of work by Sullivant in \cite{Su}: 

\begin{cor}
\label{bipartite}
Let $G$ be a bipartite graph, and let $I=J(G)$.  Then $\sdefect(I,m)=0$ for all $m\in\mathbb{N}$.  
\end{cor}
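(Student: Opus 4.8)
The plan is to deduce the vanishing of $\sdefect(I,m)$ directly from the combinatorial description of indecomposable vertex covers of bipartite graphs provided by Theorem \ref{DupVil}(i). Recall that, in the language set up just before Theorem \ref{gendegree2}, $\sdefect(J(G),m)$ counts the minimal vertex $m$-covers of $G$ that are \emph{indecomposable}, i.e.\ that cannot be written as a product of $m$ vertex $1$-covers. So it suffices to show that $G$ being bipartite forces every minimal vertex $m$-cover to decompose into a product of $m$ vertex $1$-covers, for every $m \geq 1$.

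First I would dispense with $m=1$: here $J(G)^{(1)} = J(G) = J(G)^1$, so $\sdefect(I,1) = \mu(J(G)/J(G)) = 0$ trivially. Next, for $m \geq 2$, I invoke Theorem \ref{DupVil}(i) verbatim: when $G$ is bipartite, the theorem asserts that \emph{all} vertex $m$-covers of $G$ are divisible by a product of $m$ vertex $1$-covers. In particular, if $f$ is a minimal vertex $m$-cover then $f$ is divisible by some $h = g_1 g_2 \cdots g_m$ with each $g_k$ a vertex $1$-cover; but $h$ is itself a vertex $m$-cover, so $h \in J(G)^{(m)}$, and minimality of $f$ forces $f = h$ (after noting $f \mid h$ cannot happen strictly since $h$ is divisible by $f$ and $f$ is minimal, hence $f$ and $h$ generate comparable principal-monomial relations and must be equal). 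Thus every minimal vertex $m$-cover already lies in the monomial generating set of $J(G)^m$, so $J(G)^{(m)} = J(G)^m$ and $\sdefect(I,m) = 0$.

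Alternatively — and this is perhaps cleaner to write — one can argue at the level of ideals rather than generators: Theorem \ref{DupVil}(i) says the monomial generators of $J(G)^{(m)}$ are all contained in $J(G)^m$ for $m \geq 2$ (each being divisible by a product of $m$ one-covers, which is a monomial generator of $J(G)^m$), hence $J(G)^{(m)} \subseteq J(G)^m$; combined with the always-true containment $J(G)^m \subseteq J(G)^{(m)}$ this gives equality, and $\sdefect(I,m) = \mu(J(G)^{(m)}/J(G)^m) = \mu(0) = 0$. I would present it this way.

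There is essentially no obstacle here: the corollary is an immediate packaging of Theorem \ref{DupVil}(i). The only point requiring a sentence of care is translating "$f$ is divisible by $m$ vertex $1$-covers" into "$f \in J(G)^m$" — that is, observing that a product of $m$ vertex $1$-covers is a generator of $J(G)^m$ and that divisibility in the monomial ring means membership in the corresponding monomial ideal — but this is routine and follows from the description of $J(G)^m$ as being generated by products of $m$ vertex $1$-covers recorded in the paragraph after Theorem \ref{gendegree2}.
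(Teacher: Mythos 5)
Your proposal is correct and follows the same route as the paper: the published proof is exactly the two-line observation that Theorem \ref{DupVil}(i) rules out indecomposable $m$-covers for $m>1$ when $G$ is bipartite, hence $\sdefect(I,m)=0$. Your second, "cleaner" formulation (divisibility by a product of $m$ one-covers gives $J(G)^{(m)}\subseteq J(G)^m$, hence equality) is just a slightly more explicit unpacking of the same argument, and the extra minimality discussion in your first version is unnecessary.
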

\begin{proof}
As $G$ is bipartite, by Theorem \ref{DupVil}(i), the graph $G$ does not have indecomposable $m$-covers for $m > 1$.  Thus $\sdefect(I,m)=0$.  
\end{proof}

In contrast to this fact, we show that in general the symbolic defect of a cover ideal can be arbitrarily large. In particular we describe now a family of graphs such that $\sdefect(J(G),2)$ grows as the number of vertices of the graph grows.

Let for $n\in\mathbb{N}$, let $T_n$ be the graph on vertices $\{x_1,x_2,x_3,y_1,\dots,y_n\}$ such that the induced subgraph on $\{x_1,x_2,x_3\}$ is $C_3$ (the odd cycle of lenght three), the induced subgraph on $\{y_1,\dots,y_n\}$ is a path of length $n-1$, and $x_3$ is adjacent to $y_1$.  In the case of $n=3$, this graph is pictured below:

\begin{center}
\begin{tikzpicture}
\begin{scope}[every node/.style={fill=white,circle,thick,draw}]
    \node(A) at (1,1.7) {$x_1$};
    \node(B) at (0,0) {$x_2$};
    \node(C) at (2,0) {$x_3$};
    \node(D) at (4,0) {$y_1$};
    \node(E) at (6,0) {$y_2$};
    \node(F) at (8,0) {$y_3$};
    
\end{scope}
\begin{scope}
            [every edge/.style={draw=black,very thick}]
    \path[-](A)edge node {} (B);
    \path[-](B) edge node {} (C);
    \path[-](A) edge node {} (C);
    \path[-](C) edge node {} (D);
    \path[-](D) edge node {} (E);
    \path[-](E) edge node {} (F);
\end{scope}
\end{tikzpicture}
\end{center}

\begin{thm}
\label{triangletail}
For all $n\geq 5$, $\sdefect(J(T_n),2)=\sdefect(J(T_{n-1}),2)+\mu(J(P_{n-4})^2)$ where $P_i$ is the path of length $i$.
\end{thm}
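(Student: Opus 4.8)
The plan is to pass to the combinatorics of minimal indecomposable vertex $2$-covers via Theorem \ref{DupVil}, express both $\sdefect(J(T_n),2)$ and $\mu(J(P_{n-4})^2)$ as counts of certain independent sets, and then match the counts.

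\textbf{Step 1: a combinatorial model for $\sdefect(J(T_n),2)$.} By Theorem \ref{gendegree2} the algebra $R_s(J(T_n))$ is generated in degree at most $2$, so $\sdefect(J(T_n),2)$ counts the minimal indecomposable vertex $2$-covers of $T_n$; as $T_n$ is non-bipartite these are the single ``all variables'' cover of type (c) together with the covers of type (d) in Theorem \ref{DupVil}(ii). The triangle $\{x_1,x_2,x_3\}$ is the unique cycle of $T_n$, so the non-bipartiteness clause of condition (5) forces $x_1,x_2,x_3$ into the exponent-$1$ set $W$; then $x_3\notin N(S)$ (with $S$ the exponent-$0$ set) forces $y_1\notin S$, so $S$ is just an independent set of the tail path contained in $\{y_2,\dots,y_n\}$. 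Reading the ``no isolated vertex in $W$'' part of condition (5) along the tail, a type-(d) cover is the same datum as such an $S$ subject to: (i) if $y_a,y_b\in S$ with $a<b$ and no $y_c\in S$ has $a<c<b$, then $b-a\neq 4$; and (ii) $\max S\neq y_{n-2}$. The choice $S=\emptyset$ reproduces the type-(c) cover, so $\sdefect(J(T_n),2)$ is the number of independent $S\subseteq\{y_2,\dots,y_n\}$ of the tail satisfying (i),(ii); the same description (with $y_{n-2}$ replaced by $y_{n-3}$) holds for $T_{n-1}$.

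\textbf{Step 2: a model for $\mu(J(P_{n-4})^2)$, and the goal.} A path is bipartite, so $J(P_k)^{(2)}=J(P_k)^2$ by Corollary \ref{bipartite}; hence $p_k:=\mu(J(P_k)^2)$ is the number of minimal vertex $2$-covers of $P_k$, which the same analysis of $\{0,1,2\}$-labelings identifies with the independent sets $S'$ of $P_k$ having no gap of size exactly $4$ between consecutive elements, with $\min S'$ not the third vertex and $\max S'$ not the third-from-last vertex (allowing $S'=\emptyset$ exactly when $P_k$ has an edge). The theorem is thus the combinatorial identity $\sdefect(J(T_n),2)-\sdefect(J(T_{n-1}),2)=p_{n-4}$.

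\textbf{Step 3: proving the identity.} I would first establish $\sdefect(J(T_n),2)=p_{n-2}+p_{n-6}$ by splitting the sets $S$ of Step 1 according to whether $\min S=y_4$: if $\min S\neq y_4$, then (i),(ii) together with ``$\min S\neq y_4$'' are exactly the defining conditions of a minimal $2$-cover of the tail path $y_2-\dots-y_n\cong P_{n-2}$, giving $p_{n-2}$; if $\min S=y_4$ then $y_2,y_3\notin S$, and deleting $y_4$ identifies $S$ — the gap condition (i) at $y_4$ supplying precisely the ``$\min\neq$ third vertex'' constraint — with a minimal $2$-cover of $y_6-\dots-y_n\cong P_{n-6}$, giving $p_{n-6}$. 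I would then prove the linear recursion $p_m=p_{m-1}+p_{m-2}-p_{m-4}+p_{m-5}$ for $m\ge 5$, by the analogous surgery at the left endpoint of a path (equivalently, from the rational generating function of the number of minimal $2$-covers of paths, whose denominator is $1-z-z^2+z^4-z^5$). Combining the displayed formula for $n$ and $n-1$ with this recursion gives, for $n\ge 7$,
\[\sdefect(J(T_n),2)-\sdefect(J(T_{n-1}),2)=(p_{n-2}-p_{n-3})+(p_{n-6}-p_{n-7})=p_{n-4}=\mu(J(P_{n-4})^2),\]
and the two leftover cases $n=5,6$ (where $P_{n-6}$ degenerates) are checked by direct computation.

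\textbf{Main obstacle.} The delicate ingredient is the gap condition (i), the combinatorial shadow of ``$W$ has no isolated vertex''; it is what rules out a one-line bijection, it must be carried carefully through both splitting arguments, and it produces the minus signs in the recursion for $p_m$. A secondary nuisance is that the paths $P_k$ carry a left-endpoint constraint (``$\min S'$ not the third vertex'') which is absent for $T_n$ because the tail is anchored to the triangle at $x_3$ — this asymmetry is exactly why $\sdefect(J(T_n),2)$ comes out as a sum of two path-counts rather than one.
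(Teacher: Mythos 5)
Your proposal is correct --- I checked your independent-set encoding of Theorem \ref{DupVil}(ii)(d) for $T_n$, the closed form $\sdefect(J(T_n),2)=p_{n-2}+p_{n-6}$, and the recursion $p_m=p_{m-1}+p_{m-2}-p_{m-4}+p_{m-5}$ against direct counts for $n\leq 7$, and everything matches --- but it takes a genuinely different and considerably more roundabout route than the paper. The paper never leaves the covers themselves: since the triangle is the only odd cycle, $x_1,x_2,x_3$ carry exponent $1$ in every minimal indecomposable $2$-cover, so the exponent of $y_1$ is $1$ or $2$; the covers with $y_1$-exponent $1$ are put in bijection with the indecomposable $2$-covers of $T_{n-1}$ by shifting the tail one step toward the triangle, and the covers with $y_1$-exponent $2$ are forced to contain $y_1^2y_2^0y_3^2$ and restrict on $y_4,\dots,y_n$ to exactly the minimal $2$-covers of $P_{n-4}$. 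That single case split on one vertex is the entire proof. Your route converts both sides to counts of constrained independent sets, derives a closed form, and recovers the theorem from a five-term linear recursion; this buys strictly more than the theorem states (an explicit rational generating function with denominator $1-z-z^2+z^4-z^5$ for minimal $2$-covers of paths, and a closed form for $\sdefect(J(T_n),2)$), but it concentrates all the real work in the recursion, which you only assert, and whose proof by ``surgery at the left endpoint'' is essentially the same local case analysis as the paper's bijection --- done twice, with sign cancellations to track. If you write this up, three things must be made airtight: the $\{0,1,2\}$-labeling description of minimal $2$-covers of the bipartite path is not contained in Theorem \ref{DupVil}(i) (which only says such covers decompose) and needs its own short minimality argument; the degenerate values $p_0$ and $p_{-1}$ enter already at $n=7$ (as $p_{n-7}$) and in the base case of the recursion, not only at $n=5,6$, so the convention $\mu(J(P_k)^2)=1$ for $k\leq 0$ must be fixed and shown compatible with your bijections (note that for $P_0$ the unique minimal $2$-cover corresponds to $S'=\{v\}$, not $S'=\emptyset$); and the recursion itself needs an actual derivation, e.g.\ from the block decomposition of a labeled path, which yields the numerator as well and pins down the threshold $m\geq 5$.
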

\begin{proof}
Let $C=x_1^{a_1}x_2^{a_2}x_3^{a_3}y_1^{b_1}y_2^{b_2}\cdots y_n^{b_n}$ be a minimal indecomposable vertex 2-cover of $T_n$. Since every non-bipartite induced subgraph of $T_n$ contains the vertices $x_1, x_2, x_3$, by Theorem \ref{DupVil}(ii.5), we have that $a_1=a_2=a_3=1$.  Thus, $b_1$ is nonzero.  If $b_1=1$, then $b_2>0$.  Consider $$C'=x_1^{a_1}x_2^{a_2}x_3^{a_3}y_1^{b_2}y_2^{b_3}\cdots y_{n-1}^{b_n}.$$  We claim that $C'$ is an indecomposable vertex 2-cover of $T_{n-1}$.  Suppose that $b_i=2$ for some $i$.  Then, as $C$ is a minimal indecomposable vertex 2-cover for $T_n$, we know that either $b_{i-1}=0$ or $b_{i+1}=0$.  Thus, either $y_{i-2}$ or $y_{i}$ has exponent equal to zero in $C'$, 
Thus $C'$ is a vertex 2-cover of $T_{n-1}$. 

On the other hand, if $b_1=2$, then by Theorem \ref{DupVil}(ii) we know that $b_2=0$ and $b_3=2$.  The vertices $y_4,\ldots,y_n$ form the path of length $n-4$, that is $P_{n-4}$, and $y_4^{b_4}\cdots y_n^{b_n}$ is a vertex 2-cover of this path.  Thus, we see that $$\sdefect(J(T_n),2)\geq\sdefect(J(T_{n-1}),2)+\mu(J(P_{n-4})^2).$$

To show the other inequality, let $G=x_1x_2x_3y_1^{a_1}\cdots y_{n-1}^{a_{n-1}}$ be an indecomposable 2-cover for $T_{n-1}$.  Then $C=x_1x_2x_3y_1y_2^{a_1}\cdots y_n^{a_{n-1}}$ is a vertex 2-cover for $T_n$.  Moreover, we note that the set $\{ y \, | \, y^2\mbox{ divides }C \}$ is exactly the set of neighbors of $\{ y \, | \, y\mbox{ does not divide }C \}$.  Thus, again by Theorem \ref{DupVil}, we see that $C$ is an indecomposable 2-cover of $T_n$.

Let $D=y_4^{d_4}\cdots y_n^{d_n}$ be a minimal 2-cover of $P_{n-4}$.  We claim that $H=x_1x_2x_3y_1^2y_2^0y_3^2D$ is an indecomposable 2-cover for $T_n$.  Certainly $H$ is a 2-cover of $T_n$.  As $\{ y \, | \, y^2\mbox{ divides }C \}$ is exactly the set of neighbors of $\{ y \, | \, y\mbox{ does not divide }C \}$ and $H$ is divisible by $x_1x_2x_3$, we see that $H$ is indeed indecomposable by Theorem \ref{DupVil}.
\end{proof}

\medskip

Last theorem shows that a graph having some vertex "very far" from an odd cycle can have very large symbolic defect. Conversely, if every vertex is close to an odd cycle the symbolic defect is going to be reasonably small.
It is interesting to characterize which graphs have cover ideal $I$ with $\sdefect(I,2)=1.$  This characterization   has been done in \cite{HHT} (Proposition 5.3) using the language of symbolic Rees algebras.  Our proof is different from the proof presented in \cite{HHT} and use our terminology introduced above, thus we include it below. 

Recall that a graph is not bipartite if and only if it contains an odd cycle, which means that there is an odd integer $l \geq 3$ and $l$ vertices $x_{i_{1}}, x_{i_{2}}, \ldots, x_{i_l}$ such that for $1 \leq j < l$, $ \lbrace x_{i_{j}}, x_{i_{j+1}} \rbrace$ are edges and $ \lbrace x_{i_{l}},x_{i_{1}} \rbrace$ is an edge.

\begin{thm} 
\label{sdef1}
Let $G$ be a graph with $n$ vertices and let $I=J(G)$. Then $\sdefect(I,2)=1$ if and only if $G$ is non-bipartite and every vertex in $G$ is adjacent to every odd cycle in $G$. \end{thm}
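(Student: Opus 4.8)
The plan is to characterize the single-generator case via Theorem \ref{DupVil}(ii). First I would dispose of the easy implications by contrapositive. If $G$ is bipartite, Corollary \ref{bipartite} gives $\sdefect(I,2)=0$, not $1$. If $G$ is non-bipartite but some vertex $x$ is \emph{not} adjacent to some odd cycle $C$, I want to produce at least two distinct indecomposable $2$-covers, so $\sdefect(I,2)\geq 2$. One of them is always the ``big'' one: by Theorem \ref{DupVil}(ii.c), $f_0=x_1x_2\cdots x_n$ is an indecomposable $2$-cover. For the second, I would build a type (ii.d) cover using the bad vertex $x$: set $S=\{x\}$, let $T$ be the neighbors of $x$, and let $U$ be the remaining vertices. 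I need to check the five conditions of (ii.d). Conditions (1),(2) are automatic; (4) asks that the neighbor set $T$ is not itself a vertex cover and $U\neq\emptyset$ — here is where I use that $x$ misses the odd cycle $C$: the vertices of $C$ lie in $U$ (none of them is $x$, and not all of them can be neighbors of $x$ since $C$ is not a clique for $l\geq 5$; for a triangle I need a small separate argument), so $U\supseteq V(C)$ is nonempty, and since deleting $x$ from the graph still leaves $C$ uncovered, $T$ is not a vertex cover; (5) asks that the induced subgraph on $U$ has no isolated vertices and is non-bipartite, which I arrange by choosing $S$ maximal, i.e., iteratively moving isolated vertices of the $U$-subgraph into $S$ and shrinking $U$, and noting $U$ always retains the odd cycle $C$. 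This produces a genuine type (d) cover distinct from $f_0$ (it has some exponent-$0$ variable), giving $\sdefect(I,2)\geq 2$.

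For the converse — $G$ non-bipartite and every vertex adjacent to every odd cycle implies $\sdefect(I,2)=1$ — I would show that the \emph{only} indecomposable $2$-cover is $f_0=x_1\cdots x_n$. By Theorem \ref{gendegree2} and the discussion preceding it, the indecomposable $2$-covers are exactly those of types (ii.c) and (ii.d). So it suffices to rule out every type (d) cover. Suppose $f$ is of type (d) with associated partition $V=S\sqcup T\sqcup U$, $S\neq\emptyset$ (if $S=\emptyset$ then $T=\emptyset$ and $f=f_0$). By condition (5) the induced subgraph on $U$ is non-bipartite, so it contains an odd cycle $C$. By hypothesis every vertex of $G$ — in particular every vertex in $S$ — is adjacent to $C$. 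But then each vertex of $S$ has a neighbor among $V(C)\subseteq U$, contradicting condition (3), which says the neighbors of $S$ are exactly $T$ (disjoint from $U$). Hence no type (d) cover exists and $\sdefect(I,2)=1$.

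The main obstacle I anticipate is the forward direction's construction — specifically, verifying that the partition $S\sqcup T\sqcup U$ I build from a ``bad'' vertex genuinely satisfies \emph{all} of conditions (1)--(5) of Theorem \ref{DupVil}(ii.d), and in particular handling the degenerate possibility that after removing isolated vertices the remaining $U$-subgraph might become bipartite or empty. The fix is to observe that the fixed odd cycle $C$ (which is vertex-disjoint from $S$ at every stage, since we only ever move into $S$ vertices that are isolated in the current $U$-subgraph, and $C$ has no isolated vertices) stays inside $U$ throughout, guaranteeing condition (5) and $U\neq\emptyset$. A secondary nuisance is the triangle case $l=3$ in the ``not adjacent to an odd cycle'' hypothesis, where ``$x$ not adjacent to $C$'' must be parsed carefully (it should mean $x$ is not adjacent to \emph{every} vertex of $C$, matching the statement ``adjacent to every odd cycle''); I would make this definition explicit at the start and check it is the one that makes both directions go through.
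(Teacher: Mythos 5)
Your proposal is correct and follows essentially the same route as the paper: both directions rest on the Dupont--Villarreal classification (Theorem \ref{DupVil}), with the forward direction exhibiting a second indecomposable $2$-cover of type (ii.d) built from a vertex missing an odd cycle, and the converse using adjacency of every vertex to the odd cycle inside the exponent-one set to force all exponents to equal $1$. If anything, your iterative enlargement of $S$ to eliminate isolated vertices of the induced subgraph on $U$ is more careful than the paper's argument, which simply asserts that $Fx_i^{-1}x_{j_1}\cdots x_{j_c}$ is an indecomposable vertex $2$-cover without explicitly checking condition (5) of Theorem \ref{DupVil}(ii.d).
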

\begin{proof} 
First assume $\sdefect(I,2)=1$. Hence $G$ is not bipartite by Corollary \ref{bipartite} and hence there is at least an odd cycle contained in $G$. Assume by way of contradiction that one vertex $x_i$ of $G$ is not adjacent to the odd cycle and let $x_{j_1}, \ldots, x_{j_c}$ be the vertices of $G$ adjacent to $x_i$. Clearly $F=x_1 x_2 \cdots x_n$ is an indecomposable vertex $2$-cover by Theorem \ref{DupVil}(ii), but in this case, by the same characterization of vertex $2$-covers of graph, we also have that $F x_i^{-1} x_{j_1} \cdots x_{j_c}$ is an indecomposable vertex $2$-cover and hence $\sdefect(I,2) \geq 2$. 

Conversely, assume $G$ is non-bipartite and every vertex in $G$ is adjacent to every odd cycle in $G$ and let $f= x_1^{a_1} \cdots x_n^{a_n} \in I^{(2)} \setminus I^{2}$ be an indecomposable minimal vertex $2$-cover. The set $\lbrace x_i \, : \, a_i=1 \rbrace$ is not bipartite again by Theorem \ref{DupVil}(ii), hence the induced graph on this set contains an odd cycle. Since any vertex of the graph is adjacent to this odd cycle, then $a_i \neq 0$ for every $i=1, \ldots, n$. But this also implies $a_i \neq 2$ for every $i$ since $f$ is a minimal vertex $2$-cover. Hence $f=F$ and $\sdefect(I,2)=1$.
\end{proof}

\begin{remark}
\label{rem2}
\rm From the proof of Theorem \ref{sdef1} it is clear that when $\sdefect(I,2)=1$ where $I=J(G)$, the unique generator of $\frac{I^{(2)}}{I^2}$ is $F=x_1 x_2 \dots x_n$. Hence, for $m \geq 3,$, using the decomposition of $I^{(m)}$ given in Lemma \ref{main1} together with the fact that the symbolic Rees algebra $R_s(I)$ is generated in degree $2$ (see Theorem \ref{gendegree2}), we obtain $$ I^{(m)}=(F)I^{(m-2)}+I^m. $$  
\end{remark}

A consequence of Remark \ref{rem2} is a lower bound on the resurgence of the cover ideal of this kind of graphs. Such a bound is interesting when the degree of $F$ is strictly smaller than $2 \alpha(I)$, where $ \alpha(I)$ is the minimal degree of an element of $I$.

\begin{cor}
\label{resurgencecor}
Let $I=J(G)$ be the cover ideal of a graph $G$ with $n$ vertices such that $\sdefect(I,2)=1$.  Then 
 \[\rho(I) \geq \left\{ \begin{array}{cc} \frac{2\alpha(I)}{n}&\mbox{if }\frac{n}{2}<\alpha(I)\\
1&\mbox{if }\frac{n}{2} \geq \alpha(I). \\
\end{array}\right.  \]
\end{cor}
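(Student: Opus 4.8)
The plan is to combine the Bocci--Harbourne inequality $\rho(I)\geq \alpha(I)/\hat{\alpha}(I)$ (quoted after Definition \ref{defres}) with the explicit value of $\hat{\alpha}(I)$ furnished by Corollary \ref{wald} and with the description of $I^{(2)}$ recorded in Remark \ref{rem2}. By Corollary \ref{wald} we have $\hat{\alpha}(I)=\alpha(I^{(2)})/2$, so the whole computation reduces to determining $\alpha(I^{(2)})$.

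By Remark \ref{rem2}, since $\sdefect(I,2)=1$ we have $I^{(2)}=(F)+I^2$ with $F=x_1x_2\cdots x_n$, so $\deg F=n$. Now $I^{(2)}$ is a monomial ideal, and $\{F\}$ together with a minimal monomial generating set of $I^2$ is a (possibly redundant) monomial generating set of $I^{(2)}$; discarding a redundant monomial generator never lowers the minimal degree occurring among the generators, because such a generator is divisible by a retained one, hence has at least its degree. Therefore $\alpha(I^{(2)})=\min\{\deg F,\alpha(I^2)\}=\min\{n,2\alpha(I)\}$.

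It then suffices to split into the two cases appearing in the statement. If $\tfrac{n}{2}<\alpha(I)$, then $\alpha(I^{(2)})=n$, so $\hat{\alpha}(I)=n/2$ and $\rho(I)\geq \alpha(I)/(n/2)=2\alpha(I)/n$. If $\tfrac{n}{2}\geq\alpha(I)$, then $\alpha(I^{(2)})=2\alpha(I)$, so $\hat{\alpha}(I)=\alpha(I)$ and $\rho(I)\geq \alpha(I)/\alpha(I)=1$. (The bound $\rho(I)\geq 1$ in the second case can alternatively be seen directly, since $\sdefect(I,2)=1$ forces $I^{(2)}\neq I^2$, hence $I^{(2)}\not\subseteq I^2$.)

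I do not anticipate any genuine obstacle here: the only step needing a sentence of justification is the identity $\alpha\big((F)+I^2\big)=\min\{n,2\alpha(I)\}$, i.e., that appending the single generator $F$ to $I^2$ changes the initial degree precisely when $n<2\alpha(I)$; everything else is a direct substitution into the inequalities already available in the paper.
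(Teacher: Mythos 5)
Your proposal is correct and follows essentially the same route as the paper: both derive $\alpha(I^{(2)})=\min\{n,2\alpha(I)\}$ from Remark \ref{rem2}, then substitute into the Bocci--Harbourne bound $\rho(I)\geq\alpha(I)/\hat{\alpha}(I)$ via Corollary \ref{wald} and split into the two cases. Your extra sentence justifying the initial-degree computation, and the alternative direct argument for the case $\rho(I)\geq 1$, are fine but not needed beyond what the paper does.
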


\begin{proof}
By Remark \ref{rem2}, since the degree of $F=x_1 x_2 \dots x_n$ is $n$ and $F$ is the unique generator of $\frac{I^{(2)}}{I^2}$, we get $\alpha(I^{(2)})= \min\{n, 2 \alpha(I)\}$. Hence the result follows from Definition \ref{defres} and Corollary \ref{wald}.
\end{proof}

\medskip


\section{Cover ideals and the growth of symbolic defect}

We now turn our attention to a more direct computation of the symbolic defect of cover ideals.  Since cover ideals have Noetherian symbolic Rees algebra, we know that their symbolic defects are given by eventually quasi-polynomial functions.  In this section we will directly study these quasi-polynomials for the cover ideals of certain types of graphs. In Theorem \ref{sdefformula} we give an explicit recursive formula for computing this quasi-polynomial for a class of graphs satisfying $\sdefect(J(G),2)=1$. In Proposition \ref{nu} we apply this recursive formula to establish a method for computing the degree of this quasi-polynomial for the same class of graphs.


Using the same notation as before, for a graph $G$ on $n$ vertices, we set $F=x_1 \cdots x_n$ and $I=J(G)= (g_1, \ldots, g_t)$.  Since by Remark \ref{rem2}, $I^{(m)}=(F)I^{(m-2)}+I^m $, the possible elements of $I^{(m)} / I^{m}$ are images of elements of the form $ F^k g_{i_1} \cdots g_{i_s}$ for some integers $k,s$ such that $m=2k+s$. 
It is possible to find an exact formula for the symbolic defect in the case in which all the elements of this form are not in the ordinary power $I^{m}$. For this reason we give the following definition:

\begin{definition}
\label{defstar}
\rm Let $G$ be a graph of $n$ vertices and $I=J(G)$ be its cover ideal. Assume $I=(g_1, \ldots, g_t)$ and $\sdefect(I,2)=1$ and let $F=x_1 \cdots x_n$. We say that $G$ satisfies the 
\it Indecomposability Property \rm if for any integers $k \geq 1$ and $s \geq 0$, the monomial $$F^k g_{i_1} \cdots g_{i_s} \not \in I^{2k+s}.$$
\end{definition}

The following lemma describes some graphs satisfying Indecomposability Property. Anyway, a combinatorial interpretation of this property and a characterization of graphs fulfilling it are actually unknown. 

\begin{lem}
\label{products}
Let $I=J(G)\subseteq R=\mathbb{K}[x_1\dots x_n]$ be the cover ideal of a graph $G$. Assume $\sdefect(I,2)=1$ and let $F=x_1 \cdots x_n$. Then $G$ satisfies the Indecomposability Property if at least one of the following conditions is satisfied:
\begin{enumerate}
\item $I=(g_1, \ldots, g_t)$, $\deg F < 2 \alpha(I)$ and $\deg g_i = \alpha(I)$ for every $i=1, \ldots, c.$ 
\item $I=(g_1, \ldots, g_t, h_1, \ldots, h_s)$ is generated in two different degrees $\deg g_i =\alpha_1 < \alpha_2 = \deg h_l$ for all $i,l$ and $\deg F <  \alpha_1 +\alpha_2$. Moreover there exists $j \in \lbrace 1, \ldots, n \rbrace$ such that the variable $x_{j}$ divides $g_i$ for every $i$ but it does not divide $h_l$ for every $l$.
\item $I=(g_1, \ldots, g_t, h_1, \ldots, h_s)$ is generated in two different degrees $\deg g_i =\alpha_1 < \alpha_2 = \deg h_l$ for all $i,l$. Moreover $\deg F <  \alpha_1 +\alpha_2$, there are $c$ indices $j_1, \ldots, j_c$, such that the variables $x_{j_1}, \ldots, x_{j_c}$ divide $h_l$ for every $l$ and do not divide $g_i$ for every $i$ and $ \alpha_2 -\alpha_1 \leq c$.
\end{enumerate}
\end{lem}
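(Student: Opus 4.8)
The plan is to show that in each of the three cases, a product $M = F^k g_{i_1} \cdots g_{i_s}$ fails to lie in $I^{2k+s}$ by a degree-counting argument: every generator of $I^{2k+s}$ is a product of $2k+s$ generators of $I$, so its degree is at least $(2k+s)\,\alpha(I)$ (and more refined bounds are available when generators come in two degrees). Since $I^{2k+s}$ is a monomial ideal, $M \in I^{2k+s}$ would force $M$ to be divisible by some product $w$ of $2k+s$ generators of $I$; comparing $\deg M$ with a suitable lower bound for $\deg w$ will yield a contradiction. Throughout I would use that $\deg F = n$ and that the minimal generators $g_1,\dots,g_t$ (and $h_1,\dots,h_s$ in cases (2) and (3)) have the stated degrees.

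In case (1), all generators have degree $\alpha := \alpha(I)$, so every element of $I^{2k+s}$ has degree at least $(2k+s)\alpha$, while $\deg M = kn + s\alpha = k\deg F + s\alpha < 2k\alpha + s\alpha = (2k+s)\alpha$ using the hypothesis $\deg F < 2\alpha$. Hence $M \notin I^{2k+s}$. In case (2), I would argue more carefully: suppose $M$ is divisible by a product $w = (\text{some } g\text{'s})\cdot(\text{some } h\text{'s})$ of $2k+s$ generators, say $p$ of the $g_i$ and $q$ of the $h_l$ with $p+q = 2k+s$. Then $\deg w = p\alpha_1 + q\alpha_2$. Since $\deg M = kn + s\alpha_1$ and $n = \deg F < \alpha_1 + \alpha_2$, we get $\deg M < k(\alpha_1+\alpha_2) + s\alpha_1 = (k+s)\alpha_1 + k\alpha_2$, so the divisibility forces $q \le k-1$, hence $p = 2k+s-q \ge k+s+1$. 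Now I invoke the distinguished variable $x_j$: it divides every $g_i$ and no $h_l$, so $x_j$ appears in $w$ with exponent at least $p \ge k+s+1$, while in $M = F^k g_{i_1}\cdots g_{i_s}$ the variable $x_j$ has exponent at most $k + s$ (exponent $k$ from $F^k$, since $F$ is squarefree, plus at most $1$ from each $g_{i_r}$). This contradicts $w \mid M$, so $M \notin I^{2k+s}$. Case (3) runs along the same lines: the same degree bound gives $q \le k-1$ and $p \ge k+s+1$; now each of the $c$ variables $x_{j_1},\dots,x_{j_c}$ divides every $h_l$ and no $g_i$, so together these variables contribute at least $cq$ to the exponent sum of $w$ on $\{x_{j_1},\dots,x_{j_c}\}$ whereas in $M$ they contribute at most $c k$ (from $F^k$) plus $0$ from the $g_{i_r}$, giving $cq \le ck$, i.e. $q \le k$; combined with the degree inequality and the hypothesis $\alpha_2 - \alpha_1 \le c$ one sharpens this to a strict contradiction. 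Concretely, from $w \mid M$ and $\deg w \le \deg M$ one has $p\alpha_1 + q\alpha_2 \le kn + s\alpha_1 < k(\alpha_1+\alpha_2) + s\alpha_1$; substituting $p = 2k+s-q$ gives $(2k+s-q)\alpha_1 + q\alpha_2 < (k+s)\alpha_1 + k\alpha_2$, i.e. $(k-q)\alpha_1 < (k-q)\alpha_2$ rearranged to $(q-k)(\alpha_2-\alpha_1) < 0$, forcing $q < k$; but then $q \le k-1$ combined with $q \le k$ from the variable count is consistent, so the refinement must instead come from noting that the "wasted" degree forces at least one of the $x_{j_\ell}$ to have a deficit — this is the step I expect to require the most care.

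The main obstacle is exactly case (3): unlike case (2), a single distinguished variable no longer suffices, and one must play off the aggregate exponent contribution of the $c$ variables $x_{j_1},\dots,x_{j_c}$ against the degree slack $\alpha_1 + \alpha_2 - n > 0$, using the inequality $\alpha_2 - \alpha_1 \le c$ to close the gap. I would handle this by tracking, for a hypothetical factorization of a generator of $I^{2k+s}$ dividing $M$, both the total degree and the total exponent on $\{x_{j_1},\dots,x_{j_c}\}$ simultaneously, and showing the two constraints are incompatible. The other two cases are routine degree comparisons once the squarefreeness of $F$ is used to bound exponents from above.
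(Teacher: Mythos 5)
Your case (1) is exactly the paper's one-line degree count, and your case (2) mechanism (the degree count forces too many low-degree factors, while the distinguished variable $x_j$ caps their number) is the same as the paper's, just run in the opposite order. Two problems remain. The smaller one: in cases (2) and (3) the Indecomposability Property quantifies over $F^k$ times arbitrary products of minimal generators of $I$, which here include the $h_l$; you only treat $M=F^k g_{i_1}\cdots g_{i_s}$ with every factor of degree $\alpha_1$ (your $\deg M = kn+s\alpha_1$ presupposes this). The paper works with the general monomial $q=F^k g_{i_1}\cdots g_{i_{s_1}}h_{j_1}\cdots h_{j_{s_2}}$. Your argument does extend to it (the degree count then forces at least $k+s_1+1$ low-degree factors while the $x_j$-exponent of $q$ is only $k+s_1$), but as written the mixed products are not covered.

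More seriously, case (3) is not closed, as you yourself note: your two constraints on the number $q$ of high-degree factors are consistent with each other, so no contradiction follows. The missing idea is to use the \emph{exact} $X$-exponent, where $X=x_{j_1}\cdots x_{j_c}$. The monomial in question is divisible by $X^{k+s_2}$ and by no higher power (each $h_l$ contributes one copy of $X$, $F^k$ contributes $k$, the low-degree generators contribute none). So if it were divisible by a product of $a$ low-degree and $b$ high-degree generators with $a+b=2k+s$, then $b\le k+s_2$; setting $d=(k+s_2)-b\ge 0$, the cofactor of that product inside the monomial must still absorb $X^{d}$ and hence has degree at least $dc$. This yields $\deg q\ge a\alpha_1+b\alpha_2+dc\ge a\alpha_1+b\alpha_2+d(\alpha_2-\alpha_1)=(k+s_1)\alpha_1+(k+s_2)\alpha_2$, whereas $\deg q=k\deg F+s_1\alpha_1+s_2\alpha_2<(k+s_1)\alpha_1+(k+s_2)\alpha_2$ --- a contradiction. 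It is the hypothesis $\alpha_2-\alpha_1\le c$ applied to the surplus $X$-power carried by the \emph{cofactor}, not to the factor counts alone, that closes the argument; this is exactly the step your proposal leaves open.
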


\begin{proof}

\noindent 1) The claim follows since $\deg F^k g_{i_1} \cdots g_{i_s} < (2k+s) \alpha(I) $.\\

\noindent 2) Take an element of the form $$q= F^k  g_{i_1} \cdots g_{i_{s_1}} h_{i_1} \cdots h_{i_{s_2}}.$$ 

It follows that $x_j^{k+s_1}$ divides $q$ but there is not bigger power of $x_j$ dividing $q$. By way of contradiction suppose $q \in I^{2k+s}$. Hence $q$ is divisible by at most $k+s_1$ generators of the form $g_{i}$ and by at least $k+s_2$ generators of the form $h_{l}$. Hence there are two integers $a,b$ such that $a \leq k+ s_1$, $b \geq k+s_2$ and $a+b= 2k+s$ and $q$ is divisible by $a$ generators of the form $g_{i}$ and by $b$ generators of the form $h_{l}$. Then,
$$ \deg q  =k \deg F + s_1 \alpha_1 + s_2 \alpha_2 < (k+ s_1)\alpha_1 + (k+s_2) \alpha_2, $$ but there exists an integer $c \geq 0$ such that $(k+ s_1)\alpha_1 + (k+s_2) \alpha_2 = (a +c)\alpha_1 + (b-c)\alpha_2$ and hence, since $\alpha_1 < \alpha_2$, $$ \deg q < (k+ s_1)\alpha_1 + (k+s_2) \alpha_2 \leq a \alpha_1 + b \alpha_2 \leq \deg q $$ and this is a contradiction for what assumed on $a$ and $b$. \\

\noindent 3) As in the proof of (2), take $$q= F^k  g_{i_1} \cdots g_{i_{s_1}} h_{i_1} \cdots h_{i_{s_2}}.$$ 
Let $X:=x_{k_1} \cdots x_{k_c}$. Thus $X^{k+s_2}$ divides $q$ and no bigger power of $X$ divides it.
Now, assuming by way of contradiction $q \in I^{2k+s}$, we get that $q$ is divisible by at least $k+s_1$ generators of the form $g_{i}$ and by at most $k+s_2$ generators of the form $h_{l}$. As before there are two integers $a,b$ such that $a \leq k+ s_1$, $b \geq k+s_2$, $a+b= 2k+s$ and $m$ is divisible by $a$ generators of the form $g_{i}$ and by $b$ generators of the form $h_{l}$.

Hence there exists an integer $d \geq 0$ such that $k+s_1 = a-d$ and $k+s_2= b+d$ and we can write $\deg q = a \alpha_1 + b \alpha_2 + w$ with $w \geq dc$ since we need to have $X^{k+s_2}$ dividing $q$.
It follows that $$ \deg q  =k \deg F + s_1 \alpha_1 + s_2 \alpha_2 < (k+ s_1)\alpha_1 + (k+s_2) \alpha_2  = a \alpha_1 + b \alpha_2 + d (\alpha_2 -\alpha_1)  \leq \deg q$$ since  $\alpha_2 -\alpha_1 \leq c$ and this is a contradiction.
\end{proof}


We can give examples of graphs satisfying each one of the conditions of Lemma \ref{products}. Recall that a graph is \textit{complete} is for every two distinct vertices $x_i, x_j$, the pair $ \lbrace x_i, x_j \rbrace$ is an edge. We denote the complete graph with $n$ vertices by $K_n$.  A graph is a \textit{cycle} with $n$ vertices $x_1, x_2, \ldots, x_n$ if its edges are of the form $\lbrace x_i, x_{i+1} \rbrace$ modulo $n$. (i.e also $\lbrace x_n, x_{1} \rbrace$ is an edge). We denote the cycle with $n$ vertices by $C_n$. Every cyclic graph with an even number of vertices is bipartite. 

Later complete graphs and the cycles $C_3, C_5, C_7$ will be shown to satisfy condition 1 of Lemma \ref{products}. The following graph satisfies condition 2 of Lemma \ref{products}.

\begin{center}

\begin{tikzpicture}
\begin{scope}[every node/.style={fill=white,circle,thick,draw}]
    \node(A) at (0,2) {a};
    \node(B) at  (1.75,1) {b};
    \node(C) at  (1.75,-1) {c};
    \node(D) at (0,-2) {d};
    \node (E) at (-1.75,1) {e};
    \node (F) at (-1.75,-1) {f};
    \node (G) at (0,0) {x};
\end{scope}
\begin{scope}
            [every edge/.style={draw=black,very thick}]
    \path[-](A)edge node {} (B);
    \path[-](B) edge node {} (G);
    \path[-](A) edge node {} (G);
    \path[-](C) edge node {} (G);
    \path[-](D) edge node {} (G);
    \path[-](F) edge node {} (G);
    \path[-](F) edge node {} (E);
    \path[-](E) edge node {} (G);
    \path[-](D) edge node {} (C);
\end{scope}
\end{tikzpicture}

\end{center}

Similarly, any graph consisting of $3$-cycles all joined at a single vertex satisfies the same condition.  The following graph satisfies condition 3 of Lemma \ref{products}:

\begin{center}

\begin{tikzpicture}
\begin{scope}[every node/.style={fill=white,circle,thick,draw}]
\node(A) at (0,0) {$x_1$};
\node(B) at (0,2) {$x_2$};
\node(C) at (2,2) {$x_3$};
\node(D) at (2,0) {$x_4$};
\end{scope}
\begin{scope}[every edge/.style={draw=black,very thick}]
            \path[-](A)edge node {} (B);
            \path[-](C)edge node {} (B);
            \path[-](C)edge node {} (D);
            \path[-](A)edge node {} (D);
            \path[-](D)edge node {} (B);
        
\end{scope}
\end{tikzpicture}

\end{center}

\begin{remark} 
\label{rem1}
\rm 
Not every graph satisfies the Indecomposability Property. For instance consider the graph $G=(V,E)$, pictured below. 
\begin{center}

\begin{tikzpicture}
\begin{scope}[every node/.style={fill=white,circle,thick,draw}]
    \node(A) at (3,1.5) {$x_1$};
    \node(D) at (3,3){$y_1$};
    \node(B) at (2,0) {$x_2$};
    \node(E) at (1,-1) {$y_2$};
    \node(C) at (4,0) {$x_3$};
    \node(F) at (5,-1) {$y_3$};
\end{scope}
\begin{scope}
            [every edge/.style={draw=black,very thick}]
    \path[-](A)edge node {} (B);
    \path[-](A) edge node {} (D);
    \path[-](B) edge node {} (C);
    \path[-](B) edge node {} (E);
    \path[-](A) edge node {} (C);
    \path[-](C) edge node {} (F);
\end{scope}
\end{tikzpicture}

\end{center}

 The cover ideal of this graph is $$ I=(g_1, g_2, g_3, g_4)= (x_1x_2x_3, x_1x_2y_3, x_1x_3y_2, x_2x_3y_1) $$ and we observe that, since $F=x_1x_2x_3y_1y_2y_3$ has degree $6$, the ideal $I$ does not satisfy any of the conditions of Lemma \ref{products}. Moreover we see that $F g_1 = g_2 g_3 g_4 \in I^3$. 
\end{remark}

We now concern ourselves with the computation of the symbolic defect for graphs satisfying the Indecomposability Property. First we need a preliminary Lemma.
\begin{lem} 
\label{uno}
Let $I=J(G)\subseteq R=\mathbb{K}[x_1,\dots, x_n]$ be a cover ideal of a graph, $G$, on $n$ vertices and let $f\in \mathbb{K}[x_1,\dots,x_n]$ be such that $$f\prod_{i\neq a,b}{x_i}\in I^{(m)}$$ for all $a,b\in\{1,\dots,n\}$.  Then $f\in I^{(m)}$.
\end{lem}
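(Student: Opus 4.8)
The whole content of the lemma is that although the hypothesis is stated for \emph{all} pairs $a,b$, only one instance per edge is needed. Recall from the definition of the cover ideal that $I^{(m)}=\bigcap_{\{x_i,x_j\}\in E}(x_i,x_j)^m$, so it suffices to prove that $f\in(x_k,x_l)^m$ for every edge $\{x_k,x_l\}\in E$. I would fix such an edge and apply the hypothesis with the particular choice $a=k$, $b=l$, obtaining $f\cdot\prod_{i\neq k,l}x_i\in I^{(m)}\subseteq(x_k,x_l)^m$. The point of this choice is that the ``padding'' monomial $\prod_{i\neq k,l}x_i$ now involves only variables different from $x_k$ and $x_l$, hence it lies outside the prime $(x_k,x_l)=\sqrt{(x_k,x_l)^m}$.

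The next step is to cancel this padding factor. Since $(x_k,x_l)$ is generated by a regular sequence of variables in $R$, the ideal $(x_k,x_l)^m$ is $(x_k,x_l)$-primary (equivalently one may simply localize at the prime $(x_k,x_l)$, where $\prod_{i\neq k,l}x_i$ becomes a unit). By the defining property of a primary ideal, from $f\cdot\prod_{i\neq k,l}x_i\in(x_k,x_l)^m$ together with $\prod_{i\neq k,l}x_i\notin\sqrt{(x_k,x_l)^m}$ we conclude $f\in(x_k,x_l)^m$. Running this argument over all edges of $G$ then gives $f\in\bigcap_{\{x_k,x_l\}\in E}(x_k,x_l)^m=I^{(m)}$, which is the claim.

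As a variant that avoids invoking primariness, one can first reduce to the case that $f$ is a monomial: since $I^{(m)}$ is a monomial ideal, each monomial term of $f\prod_{i\neq a,b}x_i$ already lies in $I^{(m)}$, so it is enough to treat $f=x_1^{c_1}\cdots x_n^{c_n}$. Then membership of a monomial in $(x_k,x_l)^m$ is just the inequality $c_k+c_l\geq m$ on exponents, and feeding $a=k$, $b=l$ into the hypothesis yields precisely this inequality (the padding exponents on $x_k$ and $x_l$ being $0$); intersecting over all edges gives $f\in I^{(m)}$.

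I do not expect a genuine obstacle here: the only thing to be careful about is to choose the pair $(a,b)$ to be exactly the two endpoints of the edge under consideration, so that the extra factor $\prod_{i\neq a,b}x_i$ contributes nothing to the order along $(x_k,x_l)$; once that observation is in place the proof is essentially immediate.
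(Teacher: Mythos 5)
Your proof is correct and rests on the same key observation as the paper's: choosing $a,b$ to be the two endpoints of the edge under consideration, so that the padding factor $\prod_{i\neq a,b}x_i$ does not affect membership in $(x_a,x_b)^m$. The paper's own proof is exactly your second (monomial, exponent-counting) variant, phrased contrapositively; your primary-ideal version is an equivalent, slightly more general packaging of the same idea.
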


\begin{proof} Suppose $f=x_1^{a_1}x_2^{a_2}\cdots x_n^{a_n}\not\in I^{(m)}$.  Then there exists an edge $\{x_s,x_t\}$ such that $a_s+a_t<m$.  But then $f\prod_{i\neq s,t}{x_i}\not\in (x_s,x_t)^m$, and thus $f\prod_{i\neq s,t}{x_i}\not\in I^{(m)}$.
\end{proof}

\begin{thm}
\label{sdefformula}
Let $I=J(G)\subseteq R=\mathbb{K}[x_1\dots x_n]$ where $G$ is a graph such that $\sdefect(I,2)=1$ and let $F=x_1 \cdots x_n.$
Assume that the graph $G$ satisfies the Indecomposability Property. 

Let $\nu(I,m)$ be the number of minimal generators of $I^m$ which are not divisible by $F$. Then for $m \geq 3 $, $$ \sdefect(I,m)= \sdefect(I, m-2) + \nu(I, m-2). $$  
\end{thm}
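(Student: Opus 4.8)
The plan is to count, for each $m \geq 3$, the minimal generators of $I^{(m)}/I^m$ directly, using Remark \ref{rem2} and the Indecomposability Property. By Remark \ref{rem2}, $I^{(m)} = (F)I^{(m-2)} + I^m$, so every element of $I^{(m)}$ modulo $I^m$ is represented by something in $F \cdot I^{(m-2)}$; in particular the (images of) monomials $F \cdot u$ with $u$ a monomial generator of $I^{(m-2)}$ span $I^{(m)}/I^m$. Since $\sdefect(I,2)=1$, iterating the decomposition shows that the monomial generators of $I^{(m-2)}$ are precisely the monomials of the form $F^k g_{i_1}\cdots g_{i_s}$ with $2k+s = m-2$. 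Hence the candidate generators of $I^{(m)}/I^m$ are the images of monomials $F^{k}g_{i_1}\cdots g_{i_s}$ with $2k+s = m$ and $k \geq 1$. The Indecomposability Property says exactly that none of these monomials lies in $I^m$, so each gives a genuinely nonzero class; what remains is to determine how many of these classes are needed in a minimal generating set, i.e. which are redundant modulo $I^m$ plus the others.

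First I would set up the bookkeeping. Write $S_m$ for the set of monomials $F^k g_{i_1}\cdots g_{i_s}$ with $k\geq 1$, $2k+s=m$ (up to the obvious identifications, treating $g_{i_1}\cdots g_{i_s}$ as a multiset of generators). Multiplying by $F$ gives an injection $S_{m-2}\hookrightarrow S_m$ whose image is $\{F\cdot w : w \in S_{m-2}\}$, i.e. those monomials in $S_m$ with $k\geq 2$, together with the monomials $F\cdot(g_{i_1}\cdots g_{i_{m-2}})$ coming from $w = g_{i_1}\cdots g_{i_{m-2}} \in I^{(m-2)}$ with $k=0$ (these $w$'s are minimal generators of $I^{m-2}$, not of $I^{(m-2)}/I^{m-2}$, but after multiplying by $F$ they land in $I^{(m)}\setminus I^m$). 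The complement consists exactly of the monomials $F\cdot g_{i_1}\cdots g_{i_{m-2}}$ where $g_{i_1}\cdots g_{i_{m-2}}$ is a minimal generator of $I^{m-2}$ **not** divisible by $F$ — there are $\nu(I,m-2)$ of those — plus the images $F\cdot(\text{minimal generators of } I^{(m-2)}/I^{m-2})$. So modulo $I^m$, the submodule $I^{(m)}/I^m$ is generated by (a) $F$ times a minimal generating set of $I^{(m-2)}/I^{m-2}$, which contributes $\sdefect(I,m-2)$ elements, and (b) the $\nu(I,m-2)$ monomials $F v$ with $v$ a minimal generator of $I^{m-2}$ not divisible by $F$. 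This gives the upper bound $\sdefect(I,m) \leq \sdefect(I,m-2) + \nu(I,m-2)$, and the main content is to show no further redundancy occurs: that these generators are a minimal set.

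For the matching lower bound — and this is the step I expect to be the crux — I would argue that the listed monomials are minimal generators of the module $I^{(m)}/I^m$, i.e. that none of them lies in $I^m + \mathfrak{m}\cdot(\text{the others})$. Since we are in a polynomial ring with a monomial module, minimality of a monomial generator means it is not divisible (as a monomial) by any other generator of the module times a nonunit monomial, and it is not in $I^m$. The second is handed to us by the Indecomposability Property. For the first, suppose some candidate monomial $q = F^k g_{i_1}\cdots g_{i_s}$ were divisible by $x_j \cdot q'$ for another candidate $q'$ of the same degree — since they have the same degree this forces $q = q'$. The subtle point is redundancy of a different kind: a candidate $Fv$ (type (b)) could conceivably equal, as a monomial, some $F w$ with $w$ a generator of $I^{(m-2)}/I^{m-2}$, or the span could collapse because $I^{(m-2)}$ has more monomial generators than its minimal ones. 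Here Lemma \ref{uno} is the tool: if $F^k g_{i_1}\cdots g_{i_s}$, stripped of two well-chosen variables, still had to lie in $I^{(m)}$ for all choices, then the stripped monomial would already be in $I^{(m)}$, contradicting minimality of the covers involved — this is precisely the mechanism that prevents the $\nu(I,m-2)$ type-(b) generators from being absorbed. I would carry this out by assuming a nontrivial relation $\sum c_\lambda q_\lambda \in I^m$ with not all $c_\lambda$ units times the remaining generators, extract a single monomial term of minimal degree, and derive that it lies in $I^m$, contradicting Indecomposability; the variable-stripping of Lemma \ref{uno} handles the boundary case where the offending monomial is a multiple of $F$ times a non-$F$-divisible generator of $I^{m-2}$. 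Combining the two bounds yields $\sdefect(I,m) = \sdefect(I,m-2) + \nu(I,m-2)$.
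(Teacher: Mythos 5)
Your overall strategy is the same as the paper's: decompose $I^{(m)}=(F)I^{(m-2)}+I^m$ via Remark \ref{rem2}, identify the candidate generators $F^k g_{i_1}\cdots g_{i_s}$ with $2k+s=m$ and $k\geq 1$, use the Indecomposability Property to keep them out of $I^m$, and split the generating set into $F$ times the minimal generators of $I^{(m-2)}/I^{m-2}$ and $F$ times the minimal generators of $I^{m-2}$. The gap is at the one step that carries the technical weight. You assert (``The complement consists exactly of\dots'') that the second family can be cut down to the $\nu(I,m-2)$ monomials $Fv$ with $v$ a minimal generator of $I^{m-2}$ \emph{not} divisible by $F$; equivalently, that whenever $v$ is a minimal generator of $I^{m-2}$ that \emph{is} divisible by $F$, the class of $Fv$ is redundant. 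You never prove this, and it is not formal bookkeeping: without it you only know that $I^{(m)}/I^m$ is generated by $\sdefect(I,m-2)$ elements plus $F$ times \emph{all} minimal generators of $I^{m-2}$, and identifying which of those are absorbed is exactly the content of the formula. The paper's argument for this step is: write $v=Fh$; since $Fh\in I^{m-2}\subseteq (x_s,x_t)^{m-2}$ for every edge $\{x_s,x_t\}$, the monomial $h\prod_{i\neq s,t}x_i$ lies in $(x_s,x_t)^{m-4}$, so Lemma \ref{uno} gives $h\in I^{(m-4)}$; hence $v\in (F)I^{(m-4)}=(F)I^{m-4}+(F^2)I^{m-6}+\cdots$, whose minimal generators all lie in $I^{(m-2)}\setminus I^{m-2}$ because $F\notin I^2$, so $v$ is a multiple of some $g\in I^{(m-2)}\setminus I^{m-2}$ and $Fv$ is a multiple of $Fg$, a type-(a) generator.

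Relatedly, you deploy Lemma \ref{uno} in the wrong direction: you invoke it to show the type-(b) generators are \emph{not} absorbed, but that is the easy half. If $v$ is not divisible by $F$ it cannot be a multiple of any $g\in I^{(m-2)}\setminus I^{m-2}$, simply because Remark \ref{rem2} gives $I^{(m-2)}\setminus I^{m-2}\subseteq (F)$; meanwhile $Fv\notin I^m$ by Indecomposability, and divisibility among the listed monomials is excluded by minimality of $v$ in $I^{m-2}$ (your ``nontrivial relation'' setup is unnecessary, since for monomial submodules redundancy is just divisibility). Lemma \ref{uno} is needed for the opposite task described above. Once you relocate it there and carry out the $h\in I^{(m-4)}$ argument, your proof closes and coincides with the paper's.
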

\begin{proof}
By Remark \ref{rem2} and by assuming $\sdefect(I,2)=1$, we have $I^{(m)}=(F)I^{(m-2)}+I^m $. Then the minimal elements of $I^{(m)} \setminus I^{m}$ are of the form $F^k g_{i_1} \cdots g_{i_s}$ where $g_i$ are minimal generators of $I$ and $m=2k+s$. Conversely, since the graph $G$ satisfies the Indecomposability Property, any such monomial is in $I^{(m)} \setminus I^{m}$. Hence, for $g$ such that $\overline{g}$ is a minimal generator of $\frac{I^{(m-2)}}{I^{m-2}}$, we have that $Fg$ is in $I^{(m)} \setminus I^{m}$. 
 
 The module $I^{(m)}/I^m$ is generated by the images of the minimal generators of $(F)I^{(m-2)}$, which are either generators of $(F)(I^{(m-2)}/I^{m-2})$ or images of generators of $(F)I^{m-2}$. 
 It follows that $\sdefect(I,m)$ is equal to $ \sdefect(I, m-2) $ plus the number of minimal generator of $(F)I^{m-2}$ which are not already multiples of $Fg$ for some $g \in I^{(m-2)} \setminus I^{m-2}$.  

By dividing $F$ on both sides, we have that the last number coincides with the number of minimal generator of $I^{m-2}$ which are not multiple of any element of $I^{(m-2)} \setminus I^{m-2}$ and we will show that this number is $\nu(I, m-2)$. Let $f$ be a minimal generator of $I^{m-2}$. If $f \not \in (F)$, then $f$ is not a multiple of an element of $I^{(m-2)} \setminus I^{m-2}$ since we showed, in Remark \ref{rem2}, that $I^{(m-2)} \setminus I^{m-2} \subseteq (F)$. 

On the other hand, assume $f \in (F)$.  Then $f=Fh$ for some $h\in R$.  Since $Fh\in I^{m-2}$, we know $Fh\in (x_s,x_t)^{m-2}$ for all $x_s, x_t$ where $\{x_s,x_t\}$ is an edge of $G$.  Thus $\frac{F}{x_s x_t}h\in (x_s,x_t)^{m-4}$ and so $h\in I^{(m-4)}$ by Lemma \ref{uno}. 

Then $f \in FI^{(m-4)}$, and so by Remark \ref{rem2}, we get $$f \in (F)I^{(m-4)}= (F)I^{m-4}+(F^2)I^{m-6}+ \ldots+ (F)^{\lfloor \frac{m}{2} \rfloor}J^*$$ where $J^*=R$ for $m$ even and $J^*=I$ for $m$ odd. Notice now that the minimal generators of all the summands of the previous equation are contained in $I^{(m-2)} \setminus I^{m-2}$ (since $F \not \in I^2$). Therefore $f$ is divisible by some element in $I^{(m-2)} \setminus I^{m-2}$. This proves  the formula and the theorem. 
\end{proof}

\begin{remark}
\label{remnew}
\rm Let $I$ be the cover ideal of the graph described in Remark \ref{rem1}. The reader will be able to check that in this case, for $m \geq 3$, $\sdefect(I,m)$ is less than how much is predicted by the formula of Theorem \ref{sdefformula}.
\end{remark}


In the simple case of complete graphs it is possible to apply Theorem \ref{sdefformula} in order to find the explicit value of the symbolic defect. We observe that, as stated in Corollary \ref{quasiperiodcover}, the symbolic defect is a quasipolynomial in $m$ of quasi-period two.

\begin{thm} 
\label{ex1}
Let $K_n$ be the complete graph with $n$ vertices and let $I$ be its cover ideal. Call $F= x_1 x_2 \cdots x_n$. Then \[ \sdefect(I,m) = \left\{ \begin{array}{cc} nk+1 & \mbox{if }m=2k+2 \\
nk & \mbox{if } m=2k+1 \\
 \end{array}\right. \] and hence it grows as a linear quasipolynomial in $m$.
\end{thm}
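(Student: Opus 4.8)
The plan is to apply Theorem \ref{sdefformula} recursively, so the first task is to verify its hypotheses for $K_n$. The cover ideal of $K_n$ is $J(K_n)=\bigcap_{i<j}(x_i,x_j)$, whose minimal generators are exactly the monomials $F/x_i=x_1\cdots\widehat{x_i}\cdots x_n$ for $i=1,\dots,n$; in particular $I$ is generated in the single degree $\alpha(I)=n-1$, and $\deg F=n<2(n-1)$ as soon as $n\geq 3$. Hence condition (1) of Lemma \ref{products} holds, so $K_n$ satisfies the Indecomposability Property. Moreover every vertex of $K_n$ is adjacent to every odd cycle (since $K_n$ is complete and, for $n\geq 3$, non-bipartite), so Theorem \ref{sdef1} gives $\sdefect(I,2)=1$. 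Thus Theorem \ref{sdefformula} applies and yields, for $m\geq 3$,
\[\sdefect(I,m)=\sdefect(I,m-2)+\nu(I,m-2),\]
where $\nu(I,j)$ is the number of minimal generators of $I^j$ not divisible by $F=x_1\cdots x_n$.

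The second and main computational step is to determine $\nu(I,j)$. A minimal generator of $I^j$ is a product $g_{i_1}\cdots g_{i_j}$ of $j$ of the generators $g_i=F/x_i$; such a product equals $F^j/(x_{i_1}\cdots x_{i_j})=F^{j-1}\cdot(F/(x_{i_1}\cdots x_{i_j}))$ when the $i_k$ are distinct, and more generally $x_1^{j-c_1}\cdots x_n^{j-c_n}$ where $c_i$ is the multiplicity of index $i$ among $i_1,\dots,i_j$ and $\sum c_i=j$. Such a monomial fails to be divisible by $F$ precisely when some $c_i=j$, i.e. the monomial is $g_i^{\,j}=(F/x_i)^j=x_1^j\cdots x_{i-1}^j x_i^0 x_{i+1}^j\cdots x_n^j$. (One must also check these $n$ monomials really are minimal generators of $I^j$, which follows because lowering any exponent destroys membership in $(x_i,x_k)^j$ for a suitable edge.) Hence $\nu(I,j)=n$ for every $j\geq 1$, while for $j=0$ one has $I^0=R$ with its single generator $1$ not divisible by $F$, so it is cleanest to track the recursion only for $m-2\geq 1$ and handle the small cases by hand.

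The third step is to pin down the base cases and unwind the recursion. Since $\sdefect(I,1)=0$ trivially and $\sdefect(I,2)=1$ by the above, iterating $\sdefect(I,m)=\sdefect(I,m-2)+n$ for $m\geq 3$ (using $\nu(I,m-2)=n$, valid once $m-2\geq 1$) gives, for $m=2k+1$, $\sdefect(I,2k+1)=\sdefect(I,1)+kn=kn$, and for $m=2k+2$, $\sdefect(I,2k+2)=\sdefect(I,2)+kn=kn+1$. This is exactly the claimed formula, and it is manifestly a degree-one quasi-polynomial in $m$ of quasi-period $2$ (consistent with Corollary \ref{quasiperiodcover}). The only real obstacle is the bookkeeping in the second step — correctly identifying which products $g_{i_1}\cdots g_{i_j}$ are minimal generators of $I^j$ and confirming that exactly the $n$ "pure powers" $g_i^{\,j}$ avoid divisibility by $F$; everything else is a routine induction.
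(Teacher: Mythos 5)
Your proposal is correct and follows essentially the same route as the paper: verify $\sdefect(I,2)=1$ via Theorem \ref{sdef1} and the Indecomposability Property via condition (1) of Lemma \ref{products}, observe that $F$ divides $g_ig_j$ for $i\neq j$ so that the only minimal generators of $I^j$ not divisible by $F$ are the pure powers $g_i^{\,j}$, hence $\nu(I,j)=n$, and then unwind the recursion of Theorem \ref{sdefformula}. Your write-up is somewhat more detailed than the paper's (which leaves the minimality of the $g_i^{\,j}$ and the base cases implicit), but the argument is the same.
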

\begin{proof}
Observe that $I=(g_1, \ldots, g_n)$ where $g_i= F x_i^{-1}$. Clearly by Theorem \ref{sdef1}, $\sdefect(I,2)=1$, and, since $I$ satisfies condition 1 of Lemma \ref{products}, the graph satisfies the Indecomposability Property. 

Since $F$ divides $g_i g_j$ for $i \neq j$, the minimal generators of $I^m$ which are not divisible by $F$ are $g_1^m, g_2^m, \ldots, g_n^m.$ 

It follows that $\nu(I,m)= n$ for every $m \geq 1$ and therefore the formula follows from Theorem \ref{sdefformula}. 
\end{proof}

In other cases of graphs with $\sdefect(J(G),2)=1$ and satisfying the Indecomposability Property, it is possible to establish how fast the symbolic defect is growing (i.e. its degree as a quasipolynomial in $m$) simply looking at the generators of the cover ideal.
To obtain this, we need some further result about the degree in $m$ of the quantity $\nu(I,m)$.
We recall that for an ideal $I \subseteq R=\mathbb{K}[x_1\dots x_n] $, the number of generators of the power $\mu(I^m)$ grows as a polynomial in $m$ for  $m \gg 0$.  This is clear since the fiber cone of $I$ is a standard-graded Noetherian $\mathbb{K}$-algebra and thus its Hilbert function is eventually given by a polynomial (see \cite[Theorem 13.2]{MAT}).  We call the degree of this polynomial the degree of $\mu(I^m)$.

In Proposition \ref{nu} we relate the degree in $m$ of $\nu(I,m)$ with the degree of $\mu(I^m)$. In Proposition \ref{algindip} we recall how to compute $\mu(I^m)$ in the case in which the generators of $I$ are algebraically independent over the base field $k$. We will need this fact when discussing the symbolic defect of cyclic graphs.


\begin{prop} 
\label{nu}
Let $I=J(G)\subseteq R=\mathbb{K}[x_1\dots x_n]$ where $G$ is a graph such that $\sdefect(I,2)=1$ and assume that $G$ satisfies Indecomposability Property. \\
Then for every $i=1, \ldots, n$ we have \begin{equation} \label{nuformula} \mu \left( \left( \dfrac{I+ x_iR}{x_iR} \right)^m \right) \leq \nu(I,m) \leq  \sum_{j=1}^{n} \mu \left( \left( \dfrac{I+ x_jR}{x_jR} \right)^m \right). \end{equation} Moreover, take $i$ such that $\mu(\frac{I+ x_iR}{x_iR}) \geq \mu( \frac{I+ x_jR}{x_jR} ) $ for all $j$ and let $d$ be the degree of $\mu((\frac{I+ x_iR}{x_iR})^m)$ seen as a polynomial in $m$ for $m \gg 0$. Then for $m \gg 0$, $\sdefect(I,m)$ is a quasi-polynomial in $m$ of degree $d+1$.
\end{prop}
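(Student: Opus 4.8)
The plan is to first establish the two inequalities in \eqref{nuformula} and then to use them, together with Theorem \ref{sdefformula}, to pin down the degree of $\sdefect(I,m)$ as a quasi-polynomial. For the inequalities, the key observation is that $\nu(I,m)$ counts the minimal generators of $I^m$ not divisible by $F = x_1\cdots x_n$, equivalently those minimal generators whose exponent vector has some coordinate equal to $0$. First I would fix a vertex $x_i$ and set up the natural surjection $R \twoheadrightarrow R/x_iR$, under which $I$ maps to $\overline{I_i} := (I + x_iR)/x_iR$ and $I^m$ maps to $\overline{I_i}^{\,m}$. A minimal generator of $\overline{I_i}^{\,m}$ lifts (uniquely, among monomials not divisible by $x_i$) to a monomial of $I^m$ in which $x_i$ does not appear, hence one not divisible by $F$; one should check that such a lift is in fact a \emph{minimal} generator of $I^m$ (if it were a proper multiple of another generator $g$, then $g$ would also avoid $x_i$ and map to a proper divisor of our generator in $\overline{I_i}^{\,m}$, contradicting minimality). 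This gives an injection establishing the lower bound $\mu(\overline{I_i}^{\,m}) \leq \nu(I,m)$. For the upper bound: every minimal generator of $I^m$ not divisible by $F$ has some coordinate equal to $0$, say the $j$-th, and hence its image in $R/x_jR$ is a minimal generator of $\overline{I_j}^{\,m}$ (again by a minimality check); summing over $j = 1,\dots,n$ overcounts, giving $\nu(I,m) \leq \sum_{j=1}^n \mu(\overline{I_j}^{\,m})$.

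Next I would extract the degree statement. Each $\mu(\overline{I_j}^{\,m})$ is, for $m \gg 0$, a polynomial in $m$ by the fiber-cone argument recalled just before the Proposition (the fiber cone of $\overline{I_j}$ is a standard-graded Noetherian $\mathbb{K}$-algebra). Let $d_j$ be the degree of $\mu(\overline{I_j}^{\,m})$ and let $d = \max_j d_j$, achieved by the index $i$ singled out in the statement; the choice of $i$ as the one maximizing $\mu(\overline{I_i}^{\,m})$ at a single value (or for large $m$) forces $d_i = d$, since a polynomial of strictly larger degree eventually dominates. Squeezing $\nu(I,m)$ between $\mu(\overline{I_i}^{\,m})$ (degree exactly $d$) and $\sum_j \mu(\overline{I_j}^{\,m})$ (degree $\max_j d_j = d$), we conclude $\nu(I,m)$ is, for $m \gg 0$, given by a function trapped between two polynomials of degree $d$ with the same leading behavior order — so $\nu(I,m) = \Theta(m^d)$, and in fact $\nu(I,m)$ agrees with a polynomial of degree $d$ for large $m$ (it equals $\mu(I^m) - \mu((F)I^{m-n}\cap\{\text{min.\ gens}\})$, or more directly: $\nu(I,m)$ is itself eventually polynomial as a difference of eventually-polynomial functions, and the squeeze fixes its degree at $d$).

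Finally, I would feed this into the recursion. By Theorem \ref{sdefformula}, for $m \geq 3$ we have $\sdefect(I,m) - \sdefect(I,m-2) = \nu(I,m-2)$. Iterating along each residue class modulo $2$, $\sdefect(I,m)$ is (up to a bounded additive constant depending on the parity) a partial sum $\sum \nu(I,m-2-2k)$ of values of a function that is eventually a polynomial of degree $d$ in its argument. Summing a degree-$d$ polynomial over an arithmetic progression of $\Theta(m)$ terms yields a function that is eventually a polynomial of degree $d+1$ on that residue class; combining the two parities gives that $\sdefect(I,m)$ is, for $m \gg 0$, a quasi-polynomial of quasi-period $2$ (consistent with Corollary \ref{quasiperiodcover}) and degree $d+1$.

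The main obstacle I anticipate is the bookkeeping in the minimality checks for the two inequalities: one must argue carefully that passing to the quotient $R/x_jR$ takes minimal generators to minimal generators and back, ruling out the possibility that a monomial which is minimal in the quotient lifts to a non-minimal generator upstream, or vice versa. This is where the hypothesis that we only track generators \emph{not} divisible by $F$ (equivalently, vanishing in some coordinate) is essential, since it lets the quotient map be "invertible on the nose" on the relevant monomials. Everything after that — the degree comparison and the telescoping sum — is routine once $\nu(I,m)$ is known to be eventually polynomial of degree $d$.
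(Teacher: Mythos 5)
Your proposal is correct and follows essentially the same route as the paper: identify $\mu\left(\left(\tfrac{I+x_jR}{x_jR}\right)^m\right)$ with the number of minimal generators of $I^m$ not divisible by $x_j$ to obtain both inequalities, squeeze $\nu(I,m)$ between two polynomials of degree $d$ to fix its degree, and telescope the recursion of Theorem \ref{sdefformula} to raise the degree by one. Your explicit minimality checks for the passage to $R/x_jR$ flesh out a step the paper only asserts, and your slight hedging about why $\nu(I,m)$ is eventually (quasi-)polynomial of degree exactly $d$ mirrors a point the paper itself treats equally lightly.
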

\begin{proof}
Recall that from Remark \ref{rem2}, it follows that when the $\sdefect(I,2)=1$, the unique generator of $\frac{I^{(2)}}{I^2}$ is $F=x_1 x_2 \cdots x_n$. Hence $\nu(I,m)$ is the number of minimal generators of $I^m$ not divisible by at least one variable. Since $\mu((\frac{I+ x_iR}{x_iR})^m)$ is the number of minimal generators of $I^m$ not divisible by $x_i$, we can see easily the inequalities in (\ref{nuformula}). 
Now, assuming $$\mu \left( \frac{I+ x_iR}{x_iR} \right) \geq \mu \left( \frac{I+ x_jR}{x_jR} \right) $$ for all $j$, then $\mu((\frac{I+ x_iR}{x_iR})^m)$ and $\sum_{j=1}^{n} \mu((\frac{I+ x_jR}{x_jR})^m)$ have both degree $d$ as polynomials in $m$. 

The sequence $\lbrace \nu(I,m) \rbrace_{m \geq 1}$ is bounded by a polynomial and it is non-decreasing since $F$ is not in the set of minimal generators of $I$. Hence, it admits a (possibly infinite) limit as $m$ grows to infinity. Therefore, dividing both sides of (\ref{nuformula}) by $m^d$ and passing to the limit for $m $ going to infinity, we can see that $\nu(I,m)$ is quasi-polynomial in $m$ of degree $d$. 

For a given $m$, let $k=[\frac{m-2}{2}]$. By Theorem \ref{sdefformula} we have $$\sdefect(I,m)= \sum_{i=0}^{k} \nu(I,2i + c)$$ where $c=0$ if $m$ is even and $c=1$ if $m$ is odd (recall also that $\nu(I,0)=1$ and $\nu(I,1)= \mu(I)$). It follows that $\sdefect(I,m)$ is quasi-polynomial in $m$ of degree one more than the degree of $\nu(I,m)$.
\end{proof}

\begin{prop} 
\label{algindip}
Let $I=(g_1, \ldots, g_s)$ be a squarefree monomial ideal in a polynomial ring over a field $k$. Then: 
\begin{enumerate}
\item The generators of $I$ are algebraically independent over $k$ if the matrix $M_{ij}= \lbrace \frac{\partial g_i}{ \partial x_j} \rbrace$ has maximal rank.
\item If the generators of $I$ are algebraically independent over $k$, then $$\mu(I^m)= \binom{m+s-1}{s-1}$$ is a polynomial in $m$ of degree $s-1.$
\end{enumerate}
\end{prop}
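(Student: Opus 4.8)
The plan is to prove the two assertions separately: the first is the Jacobian criterion for algebraic independence applied in a monomial setting, and the second is a count of the monomial generators of $I^m$.

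For (1), write $g_i=\prod_{\ell\in S_i}x_\ell$ with $S_i=\mathrm{supp}(g_i)$, and let $V=(v_{ij})$ be the $s\times n$ incidence matrix with $v_{ij}=1$ if $j\in S_i$ and $v_{ij}=0$ otherwise. Since $\partial g_i/\partial x_j=g_i/x_j$ when $j\in S_i$ and is $0$ otherwise, one has the factorization $M=\mathrm{diag}(g_1,\dots,g_s)\cdot V\cdot\mathrm{diag}(x_1^{-1},\dots,x_n^{-1})$ over the fraction field of $R$, whence $\mathrm{rank}\,M=\mathrm{rank}\,V$. If $M$ has maximal rank then $V$ has full row rank, so some $s\times s$ minor of $V$ is a nonzero integer and the rows $v_1,\dots,v_s$ are linearly independent over $\mathbb{Q}$; and $\mathbb{Q}$-linear independence of these exponent vectors is exactly algebraic independence of the monomials $g_i$ (a nontrivial polynomial relation among monomials yields, by clearing denominators across its support, a nontrivial $\mathbb{Z}$-linear relation among the exponent vectors, and conversely). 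One could instead run the usual chain-rule argument — differentiate a minimal-degree relation $P(g_1,\dots,g_s)=0$, observe that $\big((\partial_{y_i}P)(g_1,\dots,g_s)\big)_i$ lies in the left kernel of $M$ and hence vanishes, and derive a contradiction — but the factorization above has the advantage of being characteristic-free.

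For (2), since $I=(g_1,\dots,g_s)$ the power $I^m$ is generated by the $\binom{m+s-1}{s-1}$ monomials $g^{\mathbf a}:=g_1^{a_1}\cdots g_s^{a_s}$ with $a_1+\cdots+a_s=m$. Writing $g^{\mathbf a}=x^{a_1v_1+\cdots+a_sv_s}$, algebraic independence of the $g_i$ (equivalently, $\mathbb{Q}$-independence of the $v_i$) makes $\mathbf a\mapsto\sum_i a_iv_i$ injective, so these monomials are pairwise distinct; since the minimal monomial generators of $I^m$ are always among the $g^{\mathbf a}$, it then suffices to check that this family is an antichain under divisibility, for then $\mu(I^m)=\binom{m+s-1}{s-1}$, a polynomial of degree $s-1$ in $m$. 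This antichain step is the crux, and I expect it to be the main obstacle: if $g^{\mathbf a}\mid g^{\mathbf b}$ with $|\mathbf a|=|\mathbf b|=m$, then $\sum_i a_i\deg g_i\le\sum_i b_i\deg g_i$, and in the equigenerated case — all $g_i$ of a common degree, which is precisely what happens for the complete graphs and odd cycles to which we apply this — the two sides are forced to be equal, so $g^{\mathbf a}=g^{\mathbf b}$ and hence $\mathbf a=\mathbf b$ by the distinctness above, ruling out proper divisibility; without equigeneration one would additionally have to exclude divisibilities among products of generators of differing degrees, which we do not need here.
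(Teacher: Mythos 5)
Your proposal is correct and, on part (2), is actually more careful than the paper's own argument. For part (1) the paper simply cites Jacobi's criterion, whereas you reduce the rank of $M$ to the rank of the $0$--$1$ incidence matrix $V$ via the factorization $M=\mathrm{diag}(g_1,\dots,g_s)\,V\,\mathrm{diag}(x_1^{-1},\dots,x_n^{-1})$ and then invoke the equivalence between algebraic independence of monomials and $\mathbb{Q}$-linear independence of their exponent vectors. This is self-contained and characteristic-free (a nonzero maximal minor of $V$ over the prime field lifts to a nonzero integer minor), which is a real advantage over quoting the Jacobian criterion wholesale.

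For part (2) the paper argues that algebraic independence makes the $\binom{m+s-1}{s-1}$ products $g^{\mathbf a}$ pairwise distinct and concludes directly that $\mu(I^m)=\binom{m+s-1}{s-1}$. You correctly isolate the missing step: distinctness does not make every $g^{\mathbf a}$ a minimal generator; one must also rule out $g^{\mathbf a}$ properly dividing $g^{\mathbf b}$ for $|\mathbf a|=|\mathbf b|=m$. Your caution is warranted, because without the equigenerated hypothesis the statement is false. Take $g_1=x_1x_2$, $g_2=x_3x_4$, $g_3=x_1x_3x_5$, $g_4=x_2x_4x_5$: these are squarefree, form an antichain under divisibility, and have $\mathbb{Q}$-linearly independent exponent vectors (so they are algebraically independent), yet $g_1g_2=x_1x_2x_3x_4$ divides $g_3g_4=x_1x_2x_3x_4x_5^2$, whence $\mu(I^2)\leq 9<\binom{5}{3}$. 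Your degree argument closes the gap exactly when all $g_i$ have a common degree, and this covers every application of the proposition in the paper (the cover ideal of $K_n$, the ideal $(x_2x_3x_4,x_2x_3x_5)$ in the example, and $I^{\star}$ modulo a variable for odd cycles are all equigenerated), so the downstream results are unaffected; but the proposition as stated needs either the equigenerated hypothesis added or a weakening to $\mu(I^m)\leq\binom{m+s-1}{s-1}$, with equality when the products $g^{\mathbf a}$ with $|\mathbf a|=m$ form an antichain under divisibility.
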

\begin{proof}
We note that 1 follows immediately from Jacobi's criterion \cite{Jac}.
To prove 2, we first note that, since $g_1,\ldots,g_s$ are algebraically independent, $g_1^{a_1}\cdots g_s^{a_s}=g_1^{b_1}\cdots g_s^{b_s}$ if and only if $a_i=b_i$ for $1\leq i\leq s$.  Thus the number of minimal generators of $I^m$ is equal to the number of ways to distribute $m$ objects between $s$ sets.  Therefore $$\mu(I^m)=\binom{m+s-1}{s-1}=\frac{(m+s-1)\cdots(m+1)}{(s-1)!}$$ which is a polynomial in $m$ of degree $s-1$ .
\end{proof}

\begin{example}
We consider the graph in the following picture. Its cover ideal is generated by $$ I= (x_1x_2x_4x_5, x_1x_3x_4, x_1x_3x_5, x_2x_3x_4, x_2x_3x_5)$$ and we observe that $\sdefect(I,2)=1$ and the graph satisfies condition 2 of Lemma \ref{products}. Hence the symbolic defect of this cover ideal can be computed using the formula of Theorem \ref{sdefformula}. Moreover, $$\mu \left( \frac{I+ x_1R}{x_1R} \right) \geq \mu \left( \frac{I+ x_jR}{x_jR} \right) $$ for all $j$, and $J:= \frac{I+ x_1R}{x_1R} = (x_2x_3x_4, x_2x_3x_5)\frac{R}{x_1R}$. Applying Proposition \ref{algindip}, we see that $\mu(J^m)=m+1$, and thus by Proposition \ref{nu} the degree of $\sdefect(I,m)$ as quasipolynomial in $m$ is $2.$
\end{example}

\begin{center}

\begin{tikzpicture}
\begin{scope}[every node/.style={fill=white,circle,thick,draw}]
    \node(A) at (-1,-1) {$x_2$};
    \node(B) at  (-1,1) {$x_1$};
    \node(C) at  (1,1) {$x_4$};
    \node(D) at (1,-1) {$x_5$};
    \node (G) at (0,0) {$x_3$};
\end{scope}
\begin{scope}
            [every edge/.style={draw=black,very thick}]
    \path[-](A)edge node {} (B);
    \path[-](B) edge node {} (G);
    \path[-](A) edge node {} (G);
    \path[-](C) edge node {} (G);
    \path[-](D) edge node {} (G);
    \path[-](D) edge node {} (C);
\end{scope}
\end{tikzpicture}

\end{center}


In Theorem \ref{ncycle}, we are going to use again Propositions \ref{nu} and \ref{algindip} in order to compute an explicit formula of $\sdefect(I,m)$ and its degree in $m$ when $I$ is the cover ideal of a cyclic graph. 

Let $C_n$ be the $n$-cycle and let $I$ be its cover ideal. Since an even cycle is a bipartite graph, it is enough to consider $n=2k+1$ to be an odd number. For every $i=1, \ldots, n$ the monomials $$ g_i= x_i x_{i+2} x_{i+4} \cdots x_{i+n-1} \mbox{ mod } n$$ are $1$-vertex covers of $C_n$ and their degree is minimal among the degrees of the minimal generators of $I$. Indeed, any other generator of $I$, if exists, has degree greater than $\alpha(I)$. We are going to see in Lemma \ref{ncyclelemma}, that if $n \geq 9$, $C_n$ does not satisfy the Indecomposability Property, but later we will be anyway able to prove an alternative recursive formula for its symbolic defect.

\begin{lem} \label{ncyclelemma} Let $n=2k+1$ and let $C_n$ be the $n$-cycle. Let $I$ be the cover ideal of $C_n$ and call $F=x_1 x_2 \cdots x_n$. Assume $G$ is a minimal generators of $I$ such that $\deg G > \alpha(I)$. Then:
\begin{enumerate}
\item There exists a minimal generator $H$ of $I$ and a positive even integer $s<n$ such that $\deg H = \deg G -1$ and $$ G=H x_j x_{j+1}^{-1} x_{j+2} x_{j+3}^{-1} \cdots x_{j+s-1}^{-1} x_{j+s}.$$
\item $FG \in I^3.$ In particular $FG$ is equal to the product of three minimal generators of $I.$
\end{enumerate}
 \end{lem}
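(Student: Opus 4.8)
The plan is to translate everything into the combinatorics of vertex covers of $C_n$ and then re-tile a suitable arc of the cycle. Encode a squarefree monomial $G=\prod_{i\in S}x_i$ by its $0/1$-string around the cycle ($1$ on covered vertices, $0$ on the rest). Then $G$ is a minimal vertex $1$-cover of $C_n$ exactly when the cyclic string has no two consecutive $0$'s (cover condition) and no run of three consecutive $1$'s (minimality); equivalently the cyclic string is a concatenation of blocks $01$ and $011$. If it has $a$ blocks $01$ and $b$ blocks $011$, then $2a+3b=n$ and $\deg G=a+2b=n-(a+b)$. With $n=2k+1$ this gives $\alpha(I)=k+1$ (attained when $b=1$), and since $b=n-2(a+b)$ is odd, the hypothesis $\deg G>\alpha(I)$ forces $a+b<k$, hence $b\geq 3$.

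To prove (1), among the $b\geq 3$ blocks of type $011$ pick two that are cyclically consecutive, so that between them on one side lie exactly $r\geq 0$ blocks $01$; say the first of these two $011$-blocks occupies $x_p,x_{p+1},x_{p+2}$, with $x_p$ its $0$. Replace the length-$(2r+6)$ substring $011\,(01)^r\,011$ by $(01)^{r+3}$. This alters the $0/1$-pattern only on the $2r+3$ vertices $x_{p+2},\dots,x_{p+2r+4}$, alternately, beginning and ending with a vertex covered by $G$ but not by the new cover; the boundary vertices $x_{p+1}$ and $x_{p+2r+5}$ are unchanged, so the new cyclic string is again a concatenation of $01$'s and $011$'s, i.e.\ a minimal $1$-cover $H$ with exactly one fewer covered vertex. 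Setting $j:=p+2$ and $s:=2r+2$ (even, and $s<n$ since at least one further $011$-block lies outside the replaced arc) this reads precisely $G=H\,x_j x_{j+1}^{-1}x_{j+2}\cdots x_{j+s-1}^{-1}x_{j+s}$ with $\deg H=\deg G-1$.

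For (2), by (1) we have $G=Hm$ where $m=x_jx_{j+2}\cdots x_{j+s}\cdot(x_{j+1}x_{j+3}\cdots x_{j+s-1})^{-1}$, hence $FG=H\cdot(Fm)$ with $H$ a minimal generator of $I$. It remains to write $Fm$ as a product of two minimal $1$-covers. The monomial $Fm$ has exponent $2$ on $x_j,x_{j+2},\dots,x_{j+s}$, exponent $0$ on $x_{j+1},x_{j+3},\dots,x_{j+s-1}$, and exponent $1$ on the remaining vertices, which form a path $P$ (the cycle with the arc $x_j,\dots,x_{j+s}$ removed; here $P$ has at least two vertices). Two-color $P$ alternately into independent sets $X',X''$ and put $B'=\{x_{j+1},x_{j+3},\dots,x_{j+s-1}\}\cup X'$ and $B''=\{x_{j+1},x_{j+3},\dots,x_{j+s-1}\}\cup X''$. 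Since every neighbor of $x_{j+1},\dots,x_{j+s-1}$ lies on the deleted arc, $B'$ and $B''$ are independent in $C_n$; they are moreover maximal, because each vertex $x_{j+2\ell}$ of the arc is adjacent to some $x_{j+2\ell\pm1}\in B'\cap B''$, while the alternating coloring makes every vertex of $P$ outside $B'$ (resp.\ $B''$) adjacent within $P$ to a vertex of $B'$ (resp.\ $B''$). Therefore $g':=\prod_{x\notin B'}x$ and $g'':=\prod_{x\notin B''}x$ are minimal $1$-covers, and a direct exponent count gives $g'g''=Fm$. Hence $FG=Hg'g''$ is a product of three minimal generators, and in particular $FG\in I^3$.

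The delicate points are exactly two. In (1) one must confirm that the re-tiled cyclic string is a genuine minimal cover across the two boundary vertices of the replaced arc (so that $H$ is indeed a minimal generator with the stated degree). In (2) the key is to check that the alternating $2$-coloring of $P$ yields \emph{maximal} independent sets $B',B''$: this maximality is precisely what upgrades the statement "$Fm$ is a product of two $1$-covers" to "$Fm$ is a product of two \emph{minimal} generators", which is what the lemma asserts.
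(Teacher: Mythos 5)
Your proof is correct and follows essentially the same route as the paper's: part (1) is the same arc-flip between two cyclically consecutive adjacent pairs of covered vertices (your $011$-blocks), and part (2) rests on the same factorization $FG=H\cdot\bigl(F\cdot (G/H)\bigr)$ together with a splitting of $F\cdot (G/H)$ into two minimal covers of degree $\alpha(I)$. The only difference is bookkeeping: the paper names those two covers explicitly as $g_j$ and $g_{j+s+1}$ and checks the product by an index computation, whereas you realize them as complements of maximal independent sets coming from a $2$-coloring of the complementary path; both verifications are sound.
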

 \begin{proof}
\noindent 1) All the generators of $I$ of degree $\alpha(I)$ are of the form $g_i= x_i x_{i+2} x_{i+4} \cdots x_{i+n-1} \mbox{ mod } n$, as mentioned in the paragraph above. Hence, if we assume $\deg G  > \alpha(I)$, we have that there are two different monomials $x_i x_{i+1}$ and $x_j x_{j+1}$ dividing $G$, with $i+1 < j$. We can also assume without loss of generality that, for every $i+1 < h < j$, $x_h$ divides $G$ if and only if $x_{h-1}$ and $x_{h+1}$ do not divide $G$. Clearly, since $G$ is a minimal $1$-cover of $C_n$, $x_{i+2}$ and $x_{j-1}$ do not divide $G$. Thus, we have a sequence of indices $i+1, i+2, i+3, \ldots, j-2, j-1, j$ such that the variables $x_{i+1}, x_{i+3}, \ldots, x_{j-3}, x_{j}$ divide $G$ and all the others in the sequence do not divide $G$. It follows that $j-i$ is an odd number, otherwise we would have two consecutive variables $x_h x_{h+1}$ not dividing $G$ and this is impossible since $\lbrace h, h+1 \rbrace$ is an edge of $C_n$. Hence the monomial $$ H= G x_{i+1}^{-1} x_{i+2} x_{i+3}^{-1} \cdots x_{j-1} x_{j}^{-1}$$ is a well defined $1$-cover of $C_n$ and $\deg H = \deg G -1$. This proves item 1 taking $s= j-i.$
 
 \noindent 2) We claim that $$ FG= F H x_j x_{j+1}^{-1} x_{j+2} x_{j+3}^{-1} \cdots x_{j+s-1}^{-1} x_{j+s} = H g_{j} g_{j+s+1} $$ and hence $FG \in I^3.$ Indeed, this is equivalent to prove $$ g_{j} g_{j+s+1} = F x_j x_{j+1}^{-1} x_{j+2} x_{j+3}^{-1} \cdots x_{j+s-1}^{-1} x_{j+s}. $$ But this follows since, in the case $j$ is odd, $j+s+1$ is even and by definition $$g_j= x_j x_{j+2} x_{j+4} \cdots x_{n} x_2 x_4 \cdots x_{j-3} x_{j-1}$$ and $$g_{j+s+1}= x_{j+s+1} x_{j+s+3} \cdots x_{n-1} x_1 x_3 \cdots x_{j+s-2} x_{j+s}$$ and hence in the products, all the variables with odd index between $j$ and $j+s$ appear twice and all the variables with even index in the same interval do not appear.
 If $j$ is even, the situation is reversed and the result follows in the same way.
  \end{proof}

 \begin{lem} \label{ncyclelemma2}  Let $n=2k+1$ and let $C_n$ be the $n$-cycle. Let $I$ be the cover ideal of $C_n$ and call $F=x_1 x_2 \cdots x_n$. For $m \geq 2$, take $q= F^h f_{1} \cdots f_{s}$ where $f_i$ are minimal generators of $I$ and $m=2h+s$. 
 
 Then, $q \in I^{(m)} \setminus I^{m}$ if and only if $\deg f_i= \alpha(I)$ for every $i$. Moreover, all the monomials generating $ I^{(m)} / I^{m} $ are of this form.
 \end{lem}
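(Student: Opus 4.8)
The plan is to characterize exactly which products of the form $q = F^h f_1 \cdots f_s$ (with $f_i$ minimal generators of $I = J(C_n)$ and $m = 2h+s$) lie in $I^{(m)} \setminus I^m$, and then to argue that every minimal generator of $I^{(m)}/I^m$ has this shape. For the last point I would start from Remark \ref{rem2}: since $C_n$ is non-bipartite and $\sdefect(I,2)=1$ (which holds for odd cycles — every vertex of $C_n$ is adjacent to the unique odd cycle, namely $C_n$ itself, so Theorem \ref{sdef1} applies), we have $I^{(m)} = (F)I^{(m-2)} + I^m$, and iterating together with Theorem \ref{gendegree2} (the symbolic Rees algebra is generated in degree $2$) gives $I^{(m)} = \sum_h (F^h) I^{m-2h}$. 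Hence the generators of $I^{(m)}/I^m$ are images of monomials $F^h f_1 \cdots f_s$ with $2h+s = m$ and $h \geq 1$, establishing the "moreover" clause and reducing everything to deciding membership in $I^m$.

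**The forward direction.** Suppose $q = F^h f_1 \cdots f_s \notin I^m$. I want to show every $f_i$ has degree $\alpha(I)$. Suppose not: some $f_i = G$ has $\deg G > \alpha(I)$. By Lemma \ref{ncyclelemma}(2), $FG \in I^3$, and in fact $FG$ is a product of three minimal generators of $I$. Then $q = F^{h-1}(FG) f_1\cdots \widehat{f_i}\cdots f_s$, and since $h \geq 1$, I can substitute the factorization $FG = H g_j g_{j+s+1}$ to write $q$ as $F^{h-1}$ times a product of $3 + (s-1) = s+2$ minimal generators of $I$. Repeating this reduction on the remaining $F$'s: each application trades one $F$ and (at least) one high-degree generator for three minimal generators. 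The bookkeeping to track is that after clearing all $h$ copies of $F$ we land in $I^{2h+s} = I^m$, contradicting $q \notin I^m$ — provided at each stage there is still a high-degree generator available to feed into Lemma \ref{ncyclelemma}(2), or more simply: as long as at least one $f_i$ has $\deg f_i > \alpha(I)$, one application of Lemma \ref{ncyclelemma}(2) already pushes $Ff_i$ into $I^3$ and hence $q$ into $F^{h-1}\cdot I^{3}\cdot I^{s-1} = F^{h-1} I^{s+2}$; then since $F \notin I^2$ this is still in the chain, and one continues peeling $F$'s using $F \cdot (\text{minimal generator}) $ — here is where I must be careful, because once all remaining $f_i$ have degree $\alpha(I)$, I cannot invoke Lemma \ref{ncyclelemma}(2). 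So the cleanest argument: if even one $f_i$ has $\deg f_i > \alpha(I)$, then $q = F^{h-1}(Ff_i)\prod_{j\neq i} f_j \in F^{h-1} I^{3} I^{s-1} = F^{h-1} I^{s+2}$; now re-express this element's generating monomial and note it equals $F^{h-1} f'_1 \cdots f'_{s+2}$ for minimal generators $f'_l$, which has strictly fewer $F$-factors written but the same total monomial — induct on $h$ to conclude $q \in I^{2h+s} = I^m$, contradiction. The base case $h = 0$ is vacuous since we assumed $h \geq 1$ throughout; really the induction is on $h$ with the statement "$F^h$ times a product of $s$ minimal generators, at least one of degree $> \alpha(I)$, lies in $I^{2h+s}$."

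**The reverse direction and the main obstacle.** Conversely, suppose every $f_i$ has degree $\alpha(I)$, i.e. $f_i = g_{j_i}$ for the minimal-degree covers $g_j = x_j x_{j+2}\cdots x_{j+n-1}$. I must show $q = F^h g_{j_1}\cdots g_{j_s} \notin I^m$. Membership in $I^{(m)}$ is automatic since $I^{(m)} \supseteq (F^h) I^s \ni q$. For non-membership in $I^m$ I would use a degree count exactly in the spirit of Lemma \ref{products}(1): $\deg q = hn + s\,\alpha(I)$, while every element of $I^m$ has degree $\geq m\,\alpha(I) = (2h+s)\alpha(I)$, so it suffices that $hn + s\alpha(I) < (2h+s)\alpha(I)$, i.e. $n < 2\alpha(I)$, i.e. $\deg F < 2\alpha(I)$. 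This is precisely the content of $\sdefect(I,2) = 1$ via Corollary \ref{resurgencecor}/Remark \ref{rem2}: the unique generator $F$ of $I^{(2)}/I^2$ is not in $I^2$, so $n = \deg F < 2\alpha(I)$ unless — and here is the one point to check — $n = 2\alpha(I)$ is impossible because then $F$ could be a product of two minimal generators; indeed if $n = 2\alpha(I)$ then $\deg F = 2\alpha(I)$ and one checks (using the explicit $g_j$'s, noting $g_1 g_{(n+3)/2}$ or a similar pair multiplies to $F$ when $n$ is small) that $F \in I^2$, contradicting $\sdefect(I,2)=1$; so strictly $n < 2\alpha(I)$, and the degree inequality is strict. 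The main obstacle I anticipate is this last wrinkle: ruling out $\deg F = 2\alpha(I)$ cleanly, and making sure the forward-direction induction's bookkeeping is airtight — in particular confirming that after substituting $FG = Hg_jg_{j+s+1}$ the element genuinely has one fewer $F$ in any reasonable factorization, which ultimately rests on $F\notin I^2$ together with $\deg F < 2\alpha(I)$ so that no spurious extra copy of $F$ is forced. Both of these are handled by the degree bound $n < 2\alpha(I)$, so I would prove that inequality first and use it uniformly.
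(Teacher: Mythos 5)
Your reduction via Remark \ref{rem2} (iterated, so that every generator of $I^{(m)}/I^m$ is the image of some $F^hf_1\cdots f_s$ with $h\geq 1$) and your reverse direction agree with the paper. The detour about ruling out $n=2\alpha(I)$ is unnecessary: the paper computes $\alpha(J(C_n))=k+1$ explicitly from the covers $g_i=x_ix_{i+2}\cdots x_{i+n-1}$, so $n=2k+1<2k+2=2\alpha(I)$ and the strict degree inequality $\deg q=hn+s\alpha(I)<(2h+s)\alpha(I)=\alpha(I^m)$ (for $h\geq 1$) is immediate.

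The forward direction, however, has a genuine gap: the statement you propose to prove by induction on $h$ --- ``$F^h$ times a product of $s$ minimal generators, at least one of degree $>\alpha(I)$, lies in $I^{2h+s}$'' --- is false, and the failure occurs exactly at the point you flag but do not resolve. When you apply Lemma \ref{ncyclelemma}(2) to $Ff_i$, the factor $H$ has degree $\deg f_i-1$, which may already equal $\alpha(I)$; then the new expression $F^{h-1}f'_1\cdots f'_{s+2}$ has no high-degree factor left, the inductive hypothesis no longer applies, and if $h-1\geq 1$ the desired conclusion is actually wrong, by the very degree count you use in the reverse direction. Concretely, in $C_9$ the set $\{x_1,x_2,x_4,x_5,x_7,x_8\}$ is a minimal vertex cover, so $G=x_1x_2x_4x_5x_7x_8$ is a minimal generator of degree $6=\alpha(I)+1$; taking $q=F^2G$ with $m=5$ gives $\deg q=24<25=5\alpha(I)$, hence $q\notin I^5$, whereas your induction would conclude $q\in I^5$. (This also shows that the ``only if'' cannot be read as a statement about \emph{every} representation of $q$, since such representations are not unique; the usable content is that every minimal generator of $I^{(m)}/I^m$ \emph{admits} a representation with all $f_i$ of degree $\alpha(I)$.) The paper's proof avoids the trap by not aiming directly at ``$q\in I^m$'': it iterates the rewriting as far as possible and lands in a dichotomy --- either all factors become minimal-degree with some power $F^a$ surviving, in which case the degree count shows $q\notin I^m$ when $a\geq 1$, or the copies of $F$ are exhausted in absorbing the high-degree factors, in which case $q$ is a product of $m$ minimal generators and $q\in I^m$. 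Your argument needs to be reorganized along these lines, since this dichotomy (not the contradiction from a single bad factor) is what Theorem \ref{ncycle} actually uses.
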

 \begin{proof}
 A cyclic graph of odd order has $\sdefect(I,2)=1$ by Theorem \ref{sdef1}. Hence, by Remark \ref{rem2}, we have $I^{(m)}=(F)I^{(m-2)}+I^m $ and the possible minimal generators of $ I^{(m)} / I^{m} $ are all of the form $q= F^h f_{1} \cdots f_{s}$ described in the statement above. 
It is clear that $q \in I^{(m)}$. By item 2 of Lemma \ref{ncyclelemma}, if some $f_i$ has degree greater than $\alpha(I)$  , we note that $Ff_i$ is a product of three minimal generators of $I$, and thus we can rewrite $q$ as a product of $F^{h-1}$ and $s+2$ minimal generators of $I$.  Iterating this process as far as we can, we find two possibilities for $q$, either:

\noindent i)  $q= F^a f_{1} \cdots f_{b}$ with $2a+b=m$ and $\deg f_i= \alpha(I)$ for every $i=1,\ldots, b,$ or

\noindent ii) $q= F f_{1} \cdots f_{b}$ with $b+2=m$ and there exists at least one $f_i$ such that $\deg f_i > \alpha(I).$

In the first case, since $\deg F=n < n+1 = 2\alpha(I)$, we have $$ \deg q= a \deg F + b \alpha(I) < (2a+b) \alpha(I) = \alpha(I^m). $$ and therefore $q \not \in I^m.$

In the second case, assuming $\deg f_1 > \alpha(I)$, we have again by item 2 of Lemma \ref{ncyclelemma}, $q= h_1 h_2 h_3 f_2 \ldots f_b$ with $h_i \in I$, and hence $q \in I^{b+2}= I^m.$ This proves the lemma.
  \end{proof}

 \begin{thm} \label{ncycle}  Let $n=2k+1$ and let $C_n$ be the $n$-cycle. Let $I$ be the cover ideal of $C_n$ and call $F=x_1 x_2 \cdots x_n$. Define $I^{\star} = (g_1, \ldots , g_n)$ where $g_i= x_i x_{i+2} x_{i+4} \cdots x_{i+n-1} \mbox{ mod } n$ and let $\nu(I^{\star},m)$ be the number of minimal generators of $(I^{\star})^m$ which are not divisible by $F$. Then:
 \begin{enumerate}
\item For $m \geq 3$, $\sdefect(I,m)= \sdefect(I,m-2)+ \nu(I^{\star}, m-2).$ 
\item The degree of $ \sdefect(I,m) $ as a quasipolynomial in $m$ is $k$.
\end{enumerate}
 \end{thm}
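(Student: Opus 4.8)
The plan is to adapt the proof of Theorem~\ref{sdefformula}, using Lemmas~\ref{ncyclelemma} and~\ref{ncyclelemma2} in place of the Indecomposability Property and the ideal $I^{\star}$ in place of $I$ in the bookkeeping of the ``new'' generators appearing at each step.

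For part (1), note first that $C_n$ with $n$ odd is non-bipartite and that every vertex of $C_n$ is adjacent to the unique odd cycle of $C_n$ (namely $C_n$ itself), so Theorem~\ref{sdef1} gives $\sdefect(I,2)=1$ and Remark~\ref{rem2} applies: $I^{(m)}=(F)I^{(m-2)}+I^m$ for $m\geq 3$. I would first upgrade this, via Lemma~\ref{ncyclelemma2}, to the decomposition
\[
I^{(m)}=I^m+\sum_{h\geq 1}F^{h}\,(I^{\star})^{\,m-2h},
\]
since the monomials generating $I^{(m)}/I^m$ are precisely the $F^{h}g_{i_1}\cdots g_{i_s}$ with $2h+s=m$, $h\geq1$ and each $g_{i_j}\in\{g_1,\dots,g_n\}$; thus $\sdefect(I,m)$ equals the number of divisibility-minimal monomials in $\bigcup_{h\geq 1}\{\,F^{h}g_{i_1}\cdots g_{i_{m-2h}}\,\}$. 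Pulling one factor of $F$ out of each such monomial and separating the summands with $h\geq 2$ from the one with $h=1$: the $h\geq2$ part is $F$ times the corresponding set for $I^{(m-2)}/I^{m-2}$, so (multiplication by $F$ preserving divisibility) it contributes exactly $\sdefect(I,m-2)$ minimal generators, while the $h=1$ part is $F$ times the generating monomials of $(I^{\star})^{\,m-2}$. What then has to be counted is the number of minimal generators $v$ of $(I^{\star})^{\,m-2}$ that are \emph{not} divisible by any indecomposable $(m-2)$-cover $F^{h'}g_{j_1}\cdots g_{j_{m-2-2h'}}$ with $h'\geq 1$, and one must show this number equals $\nu(I^{\star},m-2)$.

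This identification is the step I expect to be the main obstacle. One inclusion is immediate: if $F\nmid v$ then $v$ is divisible by no multiple of $F$, in particular by no such cover. For the converse, one is given a minimal generator $v=g_{i_1}\cdots g_{i_{m-2}}$ of $(I^{\star})^{\,m-2}$ with $F\mid v$ and must exhibit an indecomposable $(m-2)$-cover dividing it; here I would exploit the arithmetic of cyclic covers. A computation with the sets of vertices missed by the $g_i$ shows that $F\mid g_{i_a}g_{i_b}$ exactly when $\{i_a,i_b\}$ is an edge of $C_n$, and that $F\mid v$ forces either (a) two of the indices $i_a,i_b$ to be adjacent in $C_n$, in which case $g_{i_a}g_{i_b}/F$ is a monomial and $v$ is divisible by $F\prod_{c\neq a,b}g_{i_c}\in F(I^{\star})^{\,m-4}$; or (b) a configuration to which Lemma~\ref{ncyclelemma}(2) applies, allowing $F$ times part of $v$ to be rewritten as a product of generators and again producing a divisor of $v$ of the form $F^{h'}g_{j_1}\cdots$. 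Carrying out this case analysis carefully --- and verifying that the divisor so produced really lies in $I^{(m-2)}\setminus I^{m-2}$, i.e.\ really is indecomposable --- is the technical heart of the proof, and is exactly where the failure of the Indecomposability Property for $C_n$ with $n\geq 9$ gets repaired by Lemma~\ref{ncyclelemma}.

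For part (2), iterating the recursion of part (1) gives $\sdefect(I,m)=\sum_{i=0}^{\lfloor (m-2)/2\rfloor}\nu(I^{\star},2i+c)$, where $c$ is the parity of $m$ and $\nu(I^{\star},0):=1$; hence $\sdefect(I,m)$ is a quasi-polynomial of degree exactly one more than that of $m\mapsto\nu(I^{\star},m)$. To determine the latter I would proceed as in Proposition~\ref{nu}: $\nu(I^{\star},m)$ is the number of minimal generators of $(I^{\star})^{m}$ not divisible by at least one variable, so
\[
\mu\!\left(\Bigl(\tfrac{I^{\star}+x_iR}{x_iR}\Bigr)^{m}\right)\ \le\ \nu(I^{\star},m)\ \le\ \sum_{i=1}^{n}\mu\!\left(\Bigl(\tfrac{I^{\star}+x_iR}{x_iR}\Bigr)^{m}\right),
\]
and by vertex-transitivity of $C_n$ all $n$ terms on the right are equal. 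Now $(I^{\star}+x_iR)/x_iR$ is generated by exactly the $k$ monomials $g_{i+2},g_{i+4},\dots,g_{i+2k}$ (those $g_j$ not divisible by $x_i$); I would check via Jacobi's criterion (Proposition~\ref{algindip}(1)) that their exponent vectors are linearly independent, so these monomials are algebraically independent over $\mathbb{K}$, and then Proposition~\ref{algindip}(2) gives that the bounding quantity is $\binom{m+k-1}{k-1}$, a polynomial of degree $k-1$ in $m$. Squeezing, $\nu(I^{\star},m)$ has degree $k-1$, whence $\sdefect(I,m)$ has degree $k$.
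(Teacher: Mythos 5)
Your overall plan coincides with the paper's: establish $\sdefect(I,2)=1$, use Lemma \ref{ncyclelemma2} to see that the generators of $I^{(m)}/I^m$ are exactly the monomials $F^h g_{i_1}\cdots g_{i_{m-2h}}$ with $h\geq 1$ and each $g_{i_j}$ of minimal degree, and then rerun the bookkeeping of Theorem \ref{sdefformula} with $I^{\star}$ in place of $I$. Your part (2) is essentially identical to the paper's argument: iterate the recursion, squeeze $\nu(I^{\star},m)$ between copies of $\mu\bigl(\bigl(\frac{I^{\star}+x_iR}{x_iR}\bigr)^m\bigr)$ using the symmetry of $C_n$, and verify algebraic independence of the $k$ surviving generators via the Jacobian criterion so that Proposition \ref{algindip} gives degree $k-1$; that part is fine modulo the routine rank computation, which the paper does with an explicit triangular submatrix.

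The genuine gap is the step you yourself flag as the ``technical heart'': showing that every minimal generator $v$ of $(I^{\star})^{m-2}$ with $F\mid v$ is a multiple of some element of $I^{(m-2)}\setminus I^{m-2}$. Your dichotomy does not close it. Case (a) (two adjacent indices) works, but case (b) genuinely occurs, and your appeal to Lemma \ref{ncyclelemma}(2) points in the wrong direction there: that lemma rewrites $FG$ as a product of three ordinary generators, i.e.\ it pushes monomials \emph{into} ordinary powers; it does not produce a divisor of the form $F^{h'}g_{j_1}\cdots$. Concretely, in $C_9$ take $v=g_2g_5g_8=x_1^2x_2^2x_3x_4^2x_5^2x_6x_7^2x_8^2x_9$: no two of the indices $2,5,8$ are adjacent, $F\mid v$, and $v/F=x_1x_2x_4x_5x_7x_8$ is itself a minimal vertex cover of degree $6$ divisible by no $g_l$, so $v$ is divisible by no $Fg_l$ and hence by no monomial of $I^{(3)}\setminus I^{3}$. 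So the converse inclusion you need cannot be obtained by the route you sketch, and you will have to explain how such monomials are accounted for in the claimed formula. The paper routes this step differently, exactly as in Theorem \ref{sdefformula}: write $v=Fh$, deduce $h\in I^{(m-4)}$ from Lemma \ref{uno}, expand $(F)I^{(m-4)}$ via Remark \ref{rem2}, and then invoke Lemmas \ref{ncyclelemma} and \ref{ncyclelemma2} to decide which of the resulting generators are indecomposable. Either carry out that argument in detail or find another treatment of monomials such as $g_2g_5g_8$; as written, part (1) of your proof is incomplete at its central step.
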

 \begin{proof}
\noindent 1) In the case $n \leq 7$, it is possible to observe that all the minimal generators of $I$ have the same degree, equal to $k+1$. Hence the graph satisfies the Indecomposability Property, because it satisfies condition 1 of Lemma \ref{products}. In this case, by definition $I=I^{\star}$ and thus the result follows applying Theorem \ref{sdefformula}.
 
 Now suppose that $n \geq 9$. We proceed along the method use to prove Theorem \ref{sdefformula}. Let $m \geq 2$ and let $q \in I^{(m)} \setminus I^{m}$ a minimal generator of $ I^{(m)}/ I^{m}$. By Lemma \ref{ncyclelemma2}, $q= F^h f_{1} \cdots f_{s}$ where $f_i$ are minimal generators and of $I$, $m=2h+s$ and $\deg f_i= \alpha(I)$ for every $i$. Hence for such a monomial $q \in I^{(m-2)} \setminus I^{m-2}$, we have $Fq \in I^{(m)} \setminus I^{m}$, again by Lemma \ref{ncyclelemma2}.
 
Thus we have that, as in the proof of Theorem \ref{sdefformula}, the symbolic defect $\sdefect(I,m)$ is equal to $ \sdefect(I, m-2) $ plus the number of minimal generators of $(F)(I^{\star})^{m-2}$ which are not multiples of $Fg$ for some $g \in I^{(m-2)} \setminus I^{m-2}$.  

Again as in the proof of Theorem \ref{sdefformula}, we have that the last number coincides with the number of minimal generators of $(I^{\star})^{m-2}$ which are not multiples of any element of $I^{(m-2)} \setminus I^{m-2}$ and we can  prove that this number is $\nu(I^{\star}, m-2)$ and this proves the formula in this case, using the same argument. 

 
\noindent 2) Applying the proof of Proposition \ref{nu} to the ideal $I^{\star}$, we see that this fact is equivalent to showing that $\mu((\frac{I^{\star}+ x_iR}{x_iR})^m)$ is a quasipolynomial of degree $k-1$ for some variable $x_i$ which maximizes such degree. The symmetry of the cyclic graphs tell us that $\mu((\frac{I^{\star}+ x_iR}{x_iR})^m)=\mu((\frac{I^{\star}+ x_1R}{x_1R})^m)$ for every $i$. Hence we just consider the ideal $\frac{I^{\star}+ x_1R}{x_1R}=(f_3, f_5, \ldots, f_{n})$. 
 
Notice that $\mu(I^{\star}/x_1)=k$ and we want to conclude the proof applying item 2 of Proposition \ref{algindip}.
In order to do this, by item (i) of Proposition \ref{algindip}, we need to show that the matrix $M= M_{ij}= \lbrace \frac{\partial g_i}{ \partial x_j} \rbrace$ has maximal rank. 

The matrix $M_{ij}$ has $k$ rows and $n-3$ columns. Consider the maximal square submatrix $H$ obtained taking the columns $3, 5, \ldots, n-2, n-1$ of $M$ and let $3 \leq j \leq n-2$ an odd number. Then, by definition of $g_i$ for $i \neq 1$  and odd, we have $$ \frac{\partial g_i}{ \partial x_j}= 0 \Longleftrightarrow x_j \mbox{ does not divide } g_i  \Longleftrightarrow j < i.$$ It is also easy to see that $ \frac{\partial g_i}{ \partial x_{n-1}}=0 $ for $i \neq 1$ odd and $ \frac{\partial g_n}{ \partial x_{n-1}} \neq 0 $. Hence the matrix $H$ is upper triangular with non zero elements on the diagonal and hence the rank of $M$ is maximal.
  \end{proof}

\section*{Acknowledgements}
Most of this work was carried out at the PRAGMATIC workshop at the Universit\'{a} di Catania in the summer of 2017.  We would like to thank the workshop organizers, Alfio Ragusa, Elena Guardo, Francesco Russo, and Giuseppe Zappal\'{a}, as well as the lecturers and collaborators, Brian Harbourne, Adam Van Tuyl, Enrico Carlini, and T\`{a}i Huy H\`{a}.  The first author is partially supported by NSF grant DMS-1601024, EPSCoR grant OIA-1557417, and the University of Nebraska-Lincoln.  Special thanks go to Adam Van Tuyl, Alexandra Seceleanu, T\`{a}i Huy H\`{a}, Jonathan Monta\~{n}o, and Giancarlo Rinaldo for helpful conversations and guidance.

\addcontentsline{toc}{chapter}{Bibliography}

\end{document}